\newtheorem{theorem}{Theorem}[section]
\newtheorem{lemma}[theorem]{Lemma}
\newtheorem{proposition}[theorem]{Proposition}
\newtheorem{definition}[theorem]{Definition}
\newtheorem{example}{Example}[section]
\newenvironment{proof}{{\par\addvspace{0.1cm}\noindent \bf Proof. }}{\hfill$\Box$\par\medskip}
\newtheorem{remark}[theorem]{Remark}
\numberwithin{equation}{section}
\def\a{\alpha}
\def\e{\varepsilon}
\def\R{\Re\mathfrak{e} \,}
\def\vect#1{\mbox{\boldmath $#1$}} 
\def\RR{\mathbb{R}}
\def\CC{\mathbb{C}}
\def\NN{\mathbb{N}}
\def\Res{\mbox{\rm Res}}
\def\const{\frac{\mu_0}{4\pi}}
\def\cii{\frac{\mu_0}{2\pi}}
\def\ciil{\frac{\mu_0\mathcal{L}}{2\pi}}
\def\ciilz{\frac{\mu_0\mathcal{L}}{2\pi(z+1)}}
\begin{document}

\title{Regularization of self inductance}

\author{Jun O'Hara\footnote{Supported by JSPS KAKENHI Grant Number 16K05136.}}
%
%
\maketitle

\begin{abstract}
We introduce several methods to define the self-inductance of a single loop as the regularization of divergent integrals which we obtain by applying Neumann (or Weber) formula for the mutual inductance of a pair of loops to the case when two loops are identical. 
%
\end{abstract}

\medskip{\small {\it Keywords:} self-inductance, Neumann's formula, knot}

{\small 2010 {\it Mathematics Subject Classification:} 53C65, 53C40, 46T30}


\section{Introduction}
%
%
Inductance is an important notion in classical electrodynamics. 
Suppose two closed circuits $\Gamma_1$ and $\Gamma_2$, mutually disjoint, carry currents $I_1$ and $I_2$ respectively. 
The magnetic field energy $U_{12}$ of the system is given by $U_{12}=I_1I_2L_{12}$, where the coefficient $L_{12}$ is called the {\em mutual inductance}. 
According to Neumann formula (\cite{N}), the mutual inductance can be expressed by 
\begin{equation}\label{f_Neumann_link}
L_N(\Gamma_1,\Gamma_2):=\const \int_{\Gamma_1}\int_{\Gamma_2}\,\frac{\,d\vect x_1\cdot d\vect x_2\,}{|\vect x_1-\vect x_2|},
\end{equation}
where $\mu_0=4\pi\times 10^{-7} \mbox{H/m}$ is the permeability of free space. 
There is another expression of the mutual inductance, Weber's formula; 
by 
\begin{equation}\label{f_Weber_link}
L_W(\Gamma_1,\Gamma_2)
:=\const \int_{\Gamma_1}\int_{\Gamma_2}\,\frac{\,(\vect{\hat r}_{12}\cdot d\vect x_1)(\vect{\hat r}_{12}\cdot d\vect x_2)\,}{|\vect x_1-\vect x_2|},
\end{equation}
where $\vect r_{12}:=\vect x_2-\vect x_1$ and ${\vect{\hat r}}_{12}:=\vect r_{12}/|\vect r_{12}|$. 
It is known that these two expressions are equivalent. The reader is referred to \cite{Da} p.230 for the proof. 
The equivalence also follows from \eqref{f_lemma}.


If we put $\Gamma_1=\Gamma_2$ in \eqref{f_Neumann_link} (or \eqref{f_Weber_link}) the integral diverges in logarithmic order because of the contribution of a neighbourhood of the diagonal set. 
There have been some studies to avoid this divergence difficulty. 
Bueno and Assis (\cite{BA}) used higher dimensional objects, i.e., a surface or a $3$-dimensional wire thicked around a core loop $\Gamma$ instead of $\Gamma$ itself. They also showed that self-inductances obtained from Neumann formula and Weber formula are equivalent. 
Dengler (\cite{D}) replace a neighbourhood of a point on the loop by a short straight wire segment to avoid the divergence of self-energy. 

In this article, we propose a new definition of the self-inductance of a single loop by regularizing $L_N(\Gamma, \Gamma)$ (and $L_W(\Gamma, \Gamma)$) without approximating the loop by higher dimensional objects (Theorem \ref{theorem1}). 
To avoid the divergence of self-energy, we use two methods studied in the theory of generalized functions, {\em Hadamard regularization} and regularization via {\em analytic continuation} (see, for example, \cite{GS} or \cite{Z}). 
The same methods have been used to define the {\sl M\"obius energy} of a knot by regularizing 
$\int_\Gamma\int_\Gamma{{|\vect x-\vect y|}^{-2}}{\,dx\,dy}$ 
in \cite{O1} and \cite{B} respectively, and in general, the {\sl Riesz energy} of a manifold $M$ by regularizing $\int_M\int_M{{|\vect x-\vect y|}^{\alpha}}{\,dx\,dy}$ in \cite{OS2}. 

There is no difference between making a choice of two methods, but unlike in the result by Bueno and Assis, the difference between making a choice of Neumann and Weber formulae is equal to $(\mu_0/2\pi)$ times the length of the loop. 

We then show that our regularized self-inductance coincides with the regularization of the mutual inductance of a pair of close parallel loops as the distance between them tends to $0$ (up to addition by a constant depending on the length of the loop) (Theorem \ref{theorem_parallel}). 
The same method has been used in \cite{OS1} (see Subsection \ref{subsection_OS1}). 

Finally we study regularized self-inductance of solenoids (i.e. coils that wind cylinders) (Theorem \ref{thm_solenoids}). 
If we expand it in a series in the number $n$ of turns per unit length, the dominant term is of the order of $n^2$, and the coeffcient does not depend on the choice of Neumann or Weber formula to start with. 
This coefficient, when the length $\ell$ of the cylinder goes to infinity, is asymptotic to $\mu_0 \pi r^2$ times $\ell$ plus constant. 
Thus, by taking the asymptotics as $n$ and $\ell$ go to infinity, we can deduce that the dominat term of self-inductance of a solenoid is equal to $\mu_0 \pi r^2 n^2 \ell$ which fits a well-known formula, which implies the validity of our definition of regularized self-inductance.

\section{Regularization of self-inductance} 
%
\subsection{Regularization with a single loop}
We start with explaining the idea of Hadamard regularization of a divergent integral in a general setting. 
Take an ``$\e$-neibourhood'' of the set where the integrand blows up, restrict the integration to the complement of it, expand the result in a Laurent series in $\e$ (possibly with a $\log$ term or terms with non-integer powers), and take the constant term. The constant term is called {\em Hadamard's finite part} of the integral.  
The terms with negative powers (and a log term if exists) are called the {\em counter terms}. 

\smallskip
Let $\Gamma$ be a smooth\footnote{We assume the smoothness for the sake of simplicity. In fact, $\Gamma$ being $C^1$ is enough.} simple loop in $\RR^3$ with length $\mathcal{L}$ parametrized by $\gamma(s)$ by the arc-length. 
\begin{theorem}\label{theorem1} 
The self-inductance can be regularized in the following way. 
\begin{enumerate}
\item Hadamard regularization of $L_N(\Gamma,\Gamma)$ and $L_W(\Gamma,\Gamma)$ can be carried out as follows. Let $\Delta_\e$ be an ``$\e$-neighbourhood'' of the diagonal set with respect to the distance in $\RR^3$: 
\[\Delta_\e:=\{(\vect x_1, \vect x_2)\in\RR^3\times\RR^3\,:\,|\vect x_1-\vect x_2|\le\e\}.\]
There exist the limits 
\begin{eqnarray}
 \displaystyle  H_N(\Gamma)&:=&\displaystyle \lim_{\e\to0^+}\left(\const \iint_{(\Gamma\times \Gamma)\setminus\Delta_\e}\,\frac{\,d\vect x_1\cdot d\vect x_2\,}{|\vect x_1-\vect x_2|}+\ciil\log\e\right), \label{Hadamard_N} \\[1mm]
\displaystyle H_W(\Gamma)&:=&\displaystyle \lim_{\e\to0^+}\left(\const\iint_{(\Gamma\times \Gamma)\setminus\Delta_\e}\,\frac{\,(\vect{\hat r}_{12}\cdot d\vect x_1)(\vect{\hat r}_{12}\cdot d\vect x_2)\,}{|\vect x_1-\vect x_2|}+\ciil\log\e\right). \label{Hadamard_W}
\end{eqnarray}
%
%
\item Fix the loop $\Gamma$ and consider  
\begin{eqnarray}
\displaystyle F_N(z)&:=&\displaystyle \const \iint_{\Gamma\times \Gamma}{|\vect x_1-\vect x_2|}^z \,d\vect x_1\cdot d\vect x_2, \label{F_N(z)} \\[1mm]
\displaystyle F_W(z)&:=&\displaystyle \const \iint_{\Gamma\times \Gamma}{|\vect x_1-\vect x_2|}^z \,(\vect{\hat r}_{12}\cdot d\vect x_1)(\vect{\hat r}_{12}\cdot d\vect x_2) \label{F_W(z)}
\end{eqnarray}
as functions of a complex variable $z$. 
Then both $F_N(z)$ and $F_W(z)$ are well-defined and holomorphic on $\{z\in\CC\,:\,\R z>-1\}$. 
The domains of $F_N(z)$ and $F_W(z)$ can be extended to the whole complex plane $\CC$ by analytic continuation to make meromorphic functions with possible simple poles at negative odd integers. Let them be denoted by the same symbols $F_N(z)$ and $F_W(z)$. The first residues are given by 
\[
\Res(F_N,-1)=\Res(F_W,-1)=\ciil.
\]
There exist the limits 
\begin{eqnarray}
\displaystyle A_N(\Gamma)&:=&\displaystyle 
%
\lim_{z\to-1}\left(F_N(z)-\ciilz\right), \label{analytic_continuation_N} \nonumber \\[1mm]
\displaystyle A_W(\Gamma)&:=&\displaystyle 
%
\lim_{z\to-1}\left(F_W(z)-\ciilz\right). \label{analytic_continuation_W} \nonumber
\end{eqnarray}
\item In each case starting with Neumann formula or Weber formula, there is no difference between making a choice of two methods of regularization, namely, $H_N(\Gamma)=A_N(\Gamma)$ and $H_W(\Gamma)=A_W(\Gamma)$. 
\item The difference between making a choice of Neumann formula and Weber formula is equal to $(\mu_0/2\pi)$ times the length of the loop, namely, 
\begin{equation}\label{f_N=W_knot}
H_W(\Gamma)=H_N(\Gamma)+\cii \, \mathcal{L}. \nonumber
\end{equation}
\end{enumerate}
\end{theorem}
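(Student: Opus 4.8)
\medskip\noindent\textbf{Proof proposal.}

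The plan is to reduce everything to the behaviour near the diagonal $\{s=t\}$ after parametrizing $\Gamma$ by arc length $\gamma(s)$, $s\in\RR/\mathcal{L}\mathbb{Z}$, writing $r=r(s,t):=|\gamma(s)-\gamma(t)|$ and $u:=t-s$. First I would record the local expansions $r=|u|\bigl(1+O(u^2)\bigr)$, $\gamma'(s)\cdot\gamma'(t)=1+O(u^2)$ and $(\vect{\hat r}_{12}\cdot\gamma'(s))(\vect{\hat r}_{12}\cdot\gamma'(t))=1+O(u^2)$, which show that both the Neumann and the Weber integrands behave like $1/|u|$ near the diagonal; this already explains why the logarithmic counter term $\ciil\log\e$ is the same in both cases. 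I would also set up the pointwise identities $\partial_s r=-\vect{\hat r}_{12}\cdot\gamma'(s)$ and $\partial_t r=\vect{\hat r}_{12}\cdot\gamma'(t)$, from which a direct differentiation gives $\partial_s\partial_t r=-\frac{\gamma'(s)\cdot\gamma'(t)}{r}-\frac{(\partial_s r)(\partial_t r)}{r}$, and hence the crucial algebraic identity that the Weber integrand minus the Neumann integrand equals $\partial_s\partial_t r$; this is the identity underlying the classical Neumann--Weber equivalence and it is the engine for part (4).

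For part (1) I would subtract an explicit model whose divergence is known: the kernel $1/d(s,t)$, where $d$ is the geodesic distance on the circle $\RR/\mathcal{L}\mathbb{Z}$. By the expansions above, $\frac{\gamma'(s)\cdot\gamma'(t)}{r}-\frac1d$ and $\frac{(\vect{\hat r}_{12}\cdot\gamma'(s))(\vect{\hat r}_{12}\cdot\gamma'(t))}{r}-\frac1d$ are $O(|u|)$ across the diagonal, hence extend to integrable functions on $\Gamma\times\Gamma$, so removing $\Delta_\e$ and letting $\e\to0^+$ converges for these regular parts. The remaining model integral $\iint_{(\Gamma\times\Gamma)\setminus\Delta_\e}\frac1d$ I would evaluate explicitly as $-2\mathcal{L}\log\e+2\mathcal{L}\log(\mathcal{L}/2)+o(1)$; multiplied by \const\ this cancels the counter term $\ciil\log\e$, so $H_N$ and $H_W$ exist, with $H_N=\const\bigl(\iint(\tfrac{\gamma'\cdot\gamma'}{r}-\tfrac1d)+2\mathcal{L}\log(\mathcal{L}/2)\bigr)$ and likewise for $H_W$. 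One point requiring care is that $\Delta_\e$ is defined by the extrinsic distance $r\le\e$ rather than by $|u|\le\e$; I would show the resulting discrepancy is $o(1)$ because the integrand is $O(1/\e)$ on a sliver of measure $O(\e^3)$.

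For parts (2) and (3) I would run the same model subtraction analytically. For $\R z>-1$ both $F_N(z)$ and $F_W(z)$ in \eqref{F_N(z)} and \eqref{F_W(z)} converge and are holomorphic. Subtracting the model $\iint d(s,t)^z=\frac{2\mathcal{L}\,(\mathcal{L}/2)^{z+1}}{z+1}$ leaves a remainder that is $O(|u|^{z+2})$ near the diagonal and hence holomorphic for $\R z>-3$; thus $F_N$ and $F_W$ continue meromorphically, the model supplies a simple pole at $z=-1$ of residue $2\mathcal{L}$, i.e.\ $\Res(F_N,-1)=\Res(F_W,-1)=\ciil$, and a finer expansion of the kernels in powers of $u$ (in which by symmetry only even powers contribute near the diagonal) produces the remaining poles, located only at negative odd integers. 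Subtracting \ciilz\ and letting $z\to-1$, the model contributes $2\mathcal{L}\log(\mathcal{L}/2)$ and the holomorphic remainder contributes exactly the regular integral found in part (1); hence $A_N=H_N$ and $A_W=H_W$, which is part (3). The mechanism is uniform: the same local model and the same counter term serve both formulae and both regularizations.

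For part (4), since the counter terms coincide the difference $H_W-H_N$ is simply $\const\lim_{\e\to0^+}\iint_{(\Gamma\times\Gamma)\setminus\Delta_\e}\partial_s\partial_t r\,ds\,dt$, by the algebraic identity from the first paragraph. I would integrate this by the fundamental theorem of calculus in $t$: for each fixed $s$ the excluded set $\{t:r(s,t)\le\e\}$ is an interval $(t_-(s),t_+(s))$ with $t_\pm(s)=s\pm\e+O(\e^3)$, so the integral of $\partial_t(\partial_s r)$ over the complement equals $\partial_s r(s,t_-(s))-\partial_s r(s,t_+(s))$. Since $\partial_s r=-\vect{\hat r}_{12}\cdot\gamma'(s)$ tends to $+1$ as $t\to s^-$ and to $-1$ as $t\to s^+$, this difference tends to $2$ uniformly in $s$, and integrating over $s\in[0,\mathcal{L}]$ gives $2\mathcal{L}$; therefore $H_W-H_N=\const\cdot2\mathcal{L}=\cii\,\mathcal{L}$, as claimed. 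The main obstacle, common to all four parts, is the rigorous near-diagonal analysis with the extrinsic cutoff: one must use the simplicity and compactness of $\Gamma$ to guarantee that for small $\e$ the level set $\{r=\e\}$ consists of exactly the two graphs $t=t_\pm(s)$ and nothing else, and then control the endpoints $t_\pm(s)$ and the boundary values of $\partial_s r$ uniformly in $s$ so that all $o(1)$ errors genuinely vanish.
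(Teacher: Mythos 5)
Your proposal is correct, and its overall architecture runs parallel to the paper's: near-diagonal expansions, a Gel'fand--Shilov-type pole extraction for part (2), and an exact-derivative identity evaluated on the boundary of the excluded neighbourhood for part (4). The key technical devices differ in two places, though, and the differences are instructive. For part (2), the paper uses the coarea formula to pass to the chord-length variable, so the inner integral becomes $\int_0^d t^z\varphi(t)\,dt$ with a density $\varphi$ \emph{independent of} $z$, and the restriction of the poles to negative odd integers comes from the evenness of $\varphi$ (i.e.\ the oddness of $\psi_{\vect x_1}$, quoted from Proposition 3.1 of \cite{OS2}); you instead stay in the arc-length parameter, subtract the explicit model $d(s,t)^z$ built from the geodesic distance, and get the same localization by symmetrizing the kernel under $u\mapsto-u$. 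Your route is more self-contained, but it drags a $z$-dependent smooth factor $\rho(s,u)^z$ (where $r=|u|\rho$) through the Taylor expansion, so you need the small extra remark that the Taylor coefficients are entire in $z$, whence the poles remain simple with the stated residues; the paper's $\varphi(t)$ device avoids this and is the formulation that generalizes to higher-dimensional submanifolds. For part (4), the paper proves a differential-form identity (Lemma \ref{lemma}, via moving frames) and applies Stokes' theorem on $\Gamma\times\Gamma\setminus\Delta_\e$; your identity, (Weber integrand) $-$ (Neumann integrand) $=\partial_s\partial_t r$, obtained by direct differentiation of $r(s,t)$, is precisely the coordinate expression of that lemma at $\a=1$, and your fundamental-theorem-of-calculus evaluation with endpoint values $\partial_s r(s,t_\pm(s))=\mp1+O(\e^2)$ reproduces the paper's boundary computation $\int_{\Gamma_{\e,+}\cup(-\Gamma_{\e,-})}\omega_1=2\mathcal{L}+O(\e^2)$. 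What you gain is elementarity (no frames, no external propositions); what the paper gains is an identity valid for every power $\a$ and machinery that extends beyond curves. Parts (1) and (3) are essentially the paper's argument in different bookkeeping, and your handling of the extrinsic-versus-intrinsic cutoff (a sliver of measure $O(\e^3)$ on which the integrand is $O(1/\e)$) correctly settles the one point the paper leaves implicit in \eqref{estimate_t_by_s}.
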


\begin{definition} \rm 
Let us call $H_N(\Gamma)=A_N(\Gamma)$ and $H_W(\Gamma)=A_W(\Gamma)$ the {\em regularized self-inductances} of $\Gamma$ {\em in the sense of Neumann and Weber} respectively. 
\end{definition}

The proof of Theorem \ref{theorem1} is divided in several steps. 

Assertions (1),(2) and (3) follow from the argument in Section 3 of \cite{OS2} with slight modification. 

\medskip
Let $\vect t_{\vect x_j}$ be the unit tangent vector to $\Gamma$ at $\vect x_j$ and $\theta_j$ $(j=1,2)$ be the angle between $\vect t_{\vect x_j}$ 
and $\vect r_{12}$. 
We remark that the numerator of integrand of \eqref{Hadamard_W} can be experssed as 
\[
(\vect{\hat r}_{12}\cdot d\vect x_1)(\vect{\hat r}_{12}\cdot d\vect x_2) 
=\cos\theta_1\cos\theta_2\,dx_1dx_2\,.
\]

Let us give the relation between the arc-length parameter and the {\sl chord length} (the distance in $\RR^3$) of a nearby point from a fixed point on the knot explicitly. 

\begin{lemma}\label{lemma_series_s<->t}
Let $\vect x_1$ be a point on $\Gamma$. Suppose a nearby point $\vect x_2$ on $\Gamma$ is expressed by the arc-length parameter $s$ from $\vect x_1$. Namely, $\vect x_1=\gamma(s_1)$ for some $s_1$ and $\vect x_2=\gamma(s_1+s)$ $(-\mathcal{L}/2<s<\mathcal{L}/2)$. 
Let $t$ be the chord length (the distance in $\RR^3$) between $\vect x_1$ and $\vect x_2$. 
Then\footnote{The last terms of \eqref{estimate_t_by_s}, \eqref{estimate_s_by_t} and \eqref{estimate_numerator_N} are not necessary in this article. 
We put them here to illustrate $\psi_{\vect x_1}^{(4)}(0)=0$ in \eqref{f_series_psi_in_t} in Lemma \ref{lemma_series_varphi}. }
\begin{eqnarray}
t&=&\displaystyle |s|\left(1-\frac{\kappa^2}{24}\,s^2-\frac{\kappa\kappa'}{24}s^3+O(s^4)\right), \label{estimate_t_by_s}  \\
s&=&\displaystyle \left\{
\begin{array}{ll}
\displaystyle \phantom{-}t+\frac{\kappa^2}{24}\,t^3+\frac{\kappa\kappa'}{24}t^4+O(t^5) &\hspace{0.5cm} (s\ge0), \\ [2mm]
\displaystyle -t-\frac{\kappa^2}{24}\,t^3+\frac{\kappa\kappa'}{24}t^4+O(t^5) &\hspace{0.5cm} (s<0), 
\end{array}
\right. \label{estimate_s_by_t}
\end{eqnarray}
where $\kappa$ and $\kappa'$ are the curvature and its derivative with respect to $s$ at point $\vect x_1$, i.e., at $s=s_1$. 
Furthermore the numerators of integrands of \eqref{Hadamard_N} and \eqref{Hadamard_W} can be estimated by 
\begin{equation}\label{estimate_numerator_N}
\displaystyle \vect t_{\vect x_1}\cdot\vect t_{\vect x_2}
=1-\frac{\kappa^2}2s^2-\frac{\kappa\kappa'}2s^3+O(s^4)
=\left\{
\begin{array}{ll}
\displaystyle 1-\frac{\kappa^2}2t^2-\frac{\kappa\kappa'}2t^3+O(t^4) & \hspace{0.4cm}(s\ge0)\\[2mm]
\displaystyle 1-\frac{\kappa^2}2t^2+\frac{\kappa\kappa'}2t^3+O(t^4) & \hspace{0.4cm}(s<0),
\end{array}
\right.
\end{equation}

\begin{equation}\label{estimate_numerator_W}
(\vect{\hat r}_{12}\cdot \vect t_{\vect x_1})(\vect{\hat r}_{12}\cdot \vect t_{\vect x_2}) =
\displaystyle 1-\frac{\kappa^2}4s^2+O(s^3)
=1-\frac{\kappa^2}4t^2+O(t^3). 
\end{equation}
\end{lemma}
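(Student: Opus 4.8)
The plan is to reduce everything to a single Taylor expansion of $\gamma$ at $\vect x_1=\gamma(s_1)$ expressed in the Frenet frame $\{\vect t_{\vect x_1},\vect n,\vect b\}$ (unit tangent, principal normal, binormal), with curvature $\kappa$ and torsion $\tau$ evaluated at $s_1$. First I would differentiate $\gamma'=\vect t_{\vect x_1}$ repeatedly via the Frenet--Serret formulae to obtain $\gamma''=\kappa\vect n$, $\gamma'''=-\kappa^2\vect t_{\vect x_1}+\kappa'\vect n+\kappa\tau\vect b$, and $\gamma^{(4)}=-3\kappa\kappa'\vect t_{\vect x_1}+(\kappa''-\kappa^3-\kappa\tau^2)\vect n+(2\kappa'\tau+\kappa\tau')\vect b$. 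Substituting these into
\[
\vect r_{12}=\gamma(s_1+s)-\gamma(s_1)=s\gamma'+\tfrac{s^2}{2}\gamma''+\tfrac{s^3}{6}\gamma'''+\tfrac{s^4}{24}\gamma^{(4)}+O(s^5)
\]
gives the three frame components of $\vect r_{12}$ as explicit power series in $s$: the tangent component is $s-\tfrac{\kappa^2}{6}s^3+O(s^4)$, the normal component is $\tfrac{\kappa}{2}s^2+\tfrac{\kappa'}{6}s^3+O(s^4)$, and the binormal component begins at order $s^3$.

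To obtain \eqref{estimate_t_by_s} I would form $t^2=|\vect r_{12}|^2$ as the sum of squares of these components; by orthonormality this is pure power-series bookkeeping, and the tangent--normal cross terms yield $t^2=s^2\bigl(1-\tfrac{\kappa^2}{12}s^2-\tfrac{\kappa\kappa'}{12}s^3+O(s^4)\bigr)$. Extracting the square root with $\sqrt{1+u}=1+\tfrac{u}{2}+O(u^2)$ then produces $t=|s|\bigl(1-\tfrac{\kappa^2}{24}s^2-\tfrac{\kappa\kappa'}{24}s^3+O(s^4)\bigr)$. The factor $|s|$ rather than $s$ is precisely what forces the two-branch form of \eqref{estimate_s_by_t}: since $t$ equals $|s|$ times an even-looking series in $s$, replacing $|s|$ by $s$ (for $s\ge0$) or by $-s$ (for $s<0$) reverses the sign of the odd-order coefficient, so the $\kappa\kappa'$ term changes sign between the branches.

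Next I would invert the series on each branch by undetermined coefficients, substituting $s=\pm t+c_3t^3+c_4t^4+O(t^5)$ into \eqref{estimate_t_by_s} and matching orders, which gives $c_3=\pm\kappa^2/24$ and $c_4=\kappa\kappa'/24$ with the signs as stated. For the first numerator, $\vect t_{\vect x_1}\cdot\vect t_{\vect x_2}=\gamma'(s_1)\cdot\gamma'(s_1+s)$ is read off by dotting $\vect t_{\vect x_1}$ with the Taylor expansion of $\gamma'(s_1+s)$; the tangential component of $\gamma^{(4)}$, namely $-3\kappa\kappa'$, supplies the $s^3$ term and yields $1-\tfrac{\kappa^2}{2}s^2-\tfrac{\kappa\kappa'}{2}s^3+O(s^4)$, after which substituting the branch-dependent $s(t)$ gives the two $t$-series in \eqref{estimate_numerator_N}. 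For \eqref{estimate_numerator_W} I would write $\vect{\hat r}_{12}\cdot\vect t_{\vect x_j}=t^{-1}\,\vect r_{12}\cdot\gamma'(s_1+(j-1)s)$; a short computation shows both factors equal $t^{-1}\bigl(s-\tfrac{\kappa^2}{6}s^3+O(s^4)\bigr)$, so their product, simplified with the $t^2$-expansion above, collapses to $1-\tfrac{\kappa^2}{4}s^2+O(s^3)$, and since $s^2=t^2+O(t^4)$ on both branches the $t$-series follows with no sign ambiguity at this order.

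The whole argument is routine once the Frenet expansion is in hand; the only delicate points are carrying enough terms in each component so that the $\kappa\kappa'$ corrections come out correctly, and tracking the signs of the odd-order terms through the $|s|$ factor and the branch-wise inversion, which is exactly the even/odd structure the footnote points to. I therefore expect no conceptual obstacle, only the need for disciplined power-series bookkeeping.
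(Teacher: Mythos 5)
Your proposal is correct and follows essentially the same route as the paper: a Frenet--Serret expansion of $\gamma(s_1+s)$ in the frame at $\vect x_1$ (your frame components and the tangential coefficient $-3\kappa\kappa'$ of $\gamma^{(4)}$ match the paper's $\xi,\eta,\zeta$ expansion), followed by power-series bookkeeping for $t^2$, branch-wise inversion, and the dot products. The paper's proof is terser---it records only the frame expansion and the expansion of $\vect{\hat r}_{12}$, leaving the inversion and the numerator estimates implicit---so your write-up simply supplies the same details the paper omits.
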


\begin{proof} 
Computation using Frenet-Serret's formula implies that $\gamma(s_1+s)$ can be expressed with respect to the Frenet-Serret frame at $\gamma(s_1)$ as 
\[
\begin{array}{rcrrrr}
\xi&=&s&&\displaystyle -\frac{\kappa^2}6s^3 &\displaystyle -\frac{\kappa\kappa'}8s^4+O(s^5)
\\[4mm]%
\eta&=&&\displaystyle \frac\kappa2s^2 &\displaystyle  +\frac{\kappa'}6s^3 &\displaystyle -\frac{\kappa^2+\kappa\tau^2-\kappa''}{24}s^4+O(s^5)
\\[4mm]%
\zeta&=&&&\displaystyle \frac{\kappa\tau}6s^3 &\displaystyle +\frac{2\kappa'\tau+\kappa\tau'}{24}s^4+O(s^5),
\end{array}
\]
where $\tau$ means the torsion. 
Then, \eqref{estimate_t_by_s} can be obtained by substituting the above series expansion to 
\[
t=|\gamma(s_1+s)-\gamma(s)|={\left[\left(\gamma(s_1+s)-\gamma(s)\right)\cdot\left(\gamma(s_1+s)-\gamma(s)\right)\right]}^{\frac12}\,.
\]
With this frame $\vect{\hat r}_{12}$ is expressed as
\[
\vect{\hat r}_{12}
=\left(
1-\frac{\kappa^2}8s^2,\,
\frac\kappa2s+\frac{\kappa'}6s^2,\,\frac{\kappa\tau}6s^2
\right)+O(s^3).
\]
\end{proof}

Next lemma is needed for the proof of (2). 

Let $B_t(\vect x)$ denote the $3$-ball with center $\vect x$ and radius $t$. 
Let $d$ be the diameter of $\Gamma$. 

\begin{lemma}\label{lemma_series_varphi}
Fix $\vect x_1$ on $\Gamma$ and put 
\[
\psi_{\vect x_1}(t):=\int_{\Gamma\cap B_t(\vect x_1)}\vect t_{\vect x_1}\cdot \vect t_{\vect x_2} \,dx_2
\hspace{0.8cm}(0\le t\le d). 
\]
Then $\psi_{\vect x_1}(t)$ extends to a smooth function on $(-d,d)$, denoted by the same symbol, with $\psi_{\vect x_1}(-t)=-\psi_{\vect x_1}(t)$. 
Put 
\[
\varphi(t):=\psi_{\vect x_1}'(t). 
\]
Then $\varphi^{(2j-1)}(0)=0$ for $j\in\NN$. 
To be precise, $\varphi(t)$ can be expanded in a series in $t$ as
\begin{equation}\label{series_varphi}
\varphi(t)=2-\frac{3\kappa^2}4t^2+O(t^4).
\end{equation}
%
\end{lemma}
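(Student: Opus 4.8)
The plan is to reduce the integral defining $\psi_{\vect x_1}$ to an integral of a smooth integrand over a symmetric interval $[-t,t]$, from which both the oddness of $\psi_{\vect x_1}$ and the evenness of $\varphi$ follow at once; the explicit expansion \eqref{series_varphi} is then read off from Lemma \ref{lemma_series_s<->t}. First I would localize. Writing $\vect x_1=\gamma(s_1)$ and $\rho(s):=|\gamma(s_1+s)-\gamma(s_1)|$ for the chord length, estimate \eqref{estimate_t_by_s} shows $\rho(s)=|s|(1+O(s^2))$, so $\rho$ is strictly increasing in $|s|$ near $0$; hence for small $t$ the set $\Gamma\cap B_t(\vect x_1)$ is exactly $\{\gamma(s_1+s):s_-(t)\le s\le s_+(t)\}$, where $s_\pm(t)$ are the two solutions of $\rho(s)=t$ with $s_+>0>s_-$ given by the two branches of \eqref{estimate_s_by_t}. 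With $g(s):=\vect t_{\vect x_1}\cdot\gamma'(s_1+s)=\vect t_{\vect x_1}\cdot\vect t_{\vect x_2}$ (so $dx_2=ds$ and $g(0)=1$) we then have $\psi_{\vect x_1}(t)=\int_{s_-(t)}^{s_+(t)}g(s)\,ds$.

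The key step is a change of variable to a \emph{signed} chord length. Since $\rho(s)^2$ is smooth with a double zero at $0$, write $\rho(s)^2=s^2h(s)$ with $h$ smooth and $h(0)=1$, and set $v(s):=s\sqrt{h(s)}$, the chord length signed by $\operatorname{sgn}(s)$. Then $v$ is smooth with $v(0)=0$ and $v'(0)=1$, so it is a diffeomorphism near $0$ with smooth inverse $s=S(v)$. Under this substitution $s_\pm(t)$ map to $v=\pm t$, and
\[
\psi_{\vect x_1}(t)=\int_{-t}^{t}G(v)\,dv,\qquad G(v):=g(S(v))\,S'(v),
\]
with $G$ smooth near $0$. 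From this representation everything local follows: $\psi_{\vect x_1}(-t)=-\psi_{\vect x_1}(t)$, so $\psi_{\vect x_1}$ extends to a smooth \emph{odd} function; and differentiating gives $\varphi(t)=\psi_{\vect x_1}'(t)=G(t)+G(-t)$, which is manifestly \emph{even}, whence $\varphi^{(2j-1)}(0)=0$ for every $j\in\NN$.

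For the precise expansion \eqref{series_varphi} I would instead differentiate $\psi_{\vect x_1}(t)=\int_{s_-(t)}^{s_+(t)}g(s)\,ds$ directly by Leibniz's rule, obtaining $\varphi(t)=g(s_+(t))\,s_+'(t)-g(s_-(t))\,s_-'(t)$, and substitute the two branches of \eqref{estimate_s_by_t} together with \eqref{estimate_numerator_N}. The odd-in-$t$ contributions (those carrying the $\kappa\kappa'$ terms) cancel between the two endpoints, reconfirming the vanishing of the cubic term, while the quadratic terms combine to give $\varphi(t)=2-\tfrac{3\kappa^2}{4}t^2+O(t^4)$; in particular $G(0)=g(0)S'(0)=1$, consistent with $\varphi(0)=2G(0)=2$.

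The main obstacle I anticipate is not this computation but the assertion of smoothness on the \emph{full} interval $(-d,d)$: the change of variable above is valid only while $\Gamma\cap B_t(\vect x_1)$ remains a single arc, i.e.\ while $t$ stays below the first positive critical value of the distance function $s\mapsto\rho(s)$. At such a critical value a new component of $\Gamma\cap B_t(\vect x_1)$ can appear, contributing an arc-length growing like $\sqrt{t-t_0}$, so carrying smoothness across it requires controlling the global behaviour of the distance from $\vect x_1$ and is the delicate part of the statement. Fortunately, only the local expansion near $t=0$ is used in the proof of assertion (2).
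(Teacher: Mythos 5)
Your argument is correct in everything it actually proves, and its computational core coincides with the paper's. The paper obtains \eqref{series_varphi} by substituting \eqref{estimate_s_by_t} and \eqref{estimate_numerator_N} into the two one-sided integrals (this is \eqref{f_series_psi_in_t}), getting $\psi_{\vect x_1}(t)=2t-\frac{\kappa^2}{4}t^3+O(t^5)$ and then differentiating; your Leibniz-rule computation $\varphi(t)=g(s_+(t))\,s_+'(t)-g(s_-(t))\,s_-'(t)$ is the same calculation with differentiation and integration interchanged, and the cancellation of the $\kappa\kappa'$ terms that you observe is exactly the cancellation of the odd-order terms between the paper's two integrals. The one structural difference is in the first half of the lemma: the paper disposes of it by quoting Proposition 3.1 of \cite{OS2} (with weight $\rho(x_1,x_2)=\vect t_{\vect x_1}\cdot\vect t_{\vect x_2}$), whereas you give a self-contained proof via the signed chord length $v(s)=s\sqrt{h(s)}$, $|\gamma(s_1+s)-\gamma(s_1)|^2=s^2h(s)$, which rewrites $\psi_{\vect x_1}(t)$ as $\int_{-t}^{t}G(v)\,dv$ with $G$ smooth and delivers the oddness of $\psi_{\vect x_1}$ and the evenness of $\varphi$ in one stroke. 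That substitution is in substance what underlies the cited proposition, specialized to curves, so your route buys self-containedness at essentially no cost.

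Your closing caveat is also well founded, and it points at the one place where the lemma as stated overreaches, rather than at a defect of your proof. Smoothness of $\psi_{\vect x_1}$ on all of $(-d,d)$ can genuinely fail: at a radius $t_0$ where the sphere $\partial B_{t_0}(\vect x_1)$ is tangent to $\Gamma$ at some $\gamma(s_1+s_0)$ (a nondegenerate local minimum of $s\mapsto|\gamma(s_1+s)-\gamma(s_1)|$), a new component of $\Gamma\cap B_t(\vect x_1)$ is born for $t>t_0$ and contributes a term of order $g(s_0)\sqrt{t-t_0}$ to $\psi_{\vect x_1}(t)$, which is not even $C^1$ at $t_0$ when $g(s_0)\ne0$; a long thin ``hairpin''-shaped loop produces such radii. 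So the lemma should be read as a statement about the germ of $\psi_{\vect x_1}$ at $t=0$, which is exactly what is used: since $\Gamma\cap B_t(\vect x_1)$ is a single arc for every $t$ below the thickness of $\Gamma$, one takes the splitting point in \eqref{GS} below that bound, and away from $0$ only integrability of $\varphi$ (square-root singularities are harmless) is needed for the first term of \eqref{GS} to remain holomorphic in $z$. With that reading, your proof, like the paper's, establishes everything required in the proof of Theorem \ref{theorem1}(2).
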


\begin{proof}
The first half follows from Proposition 3.1 of \cite{OS2} by putting $\rho(x_1,x_2)=\vect t_{\vect x_1}\cdot\vect t_{\vect x_2}$. 
We remark that the assertion $\psi_{\vect x_1}^{(2j)}(0)=0$ $(j\in\NN)$ can be illustrated by the fact that $\int_{-\delta}^\delta u^{2j-1}\,du=0$. 

We prove \eqref{series_varphi} in what follows. 
The equalities \eqref{estimate_s_by_t} and \eqref{estimate_numerator_N} imply
\begin{equation}\label{f_series_psi_in_t}
\begin{array}{rcl}
\psi_{\vect x_1}(t)&=&\displaystyle 
\int_0^t\left(1-\frac{\kappa^2}2u^2-\frac{\kappa\kappa'}2u^3+O(u^4)\right)
\left(1+\frac{\kappa^2}8u^2+\frac{\kappa\kappa'}6u^3+O(u^4)\right)du
\\[4mm]
&&\displaystyle +\int_t^0\left(1-\frac{\kappa^2}2u^2+\frac{\kappa\kappa'}2u^3+O(u^4)\right)
\left(-1-\frac{\kappa^2}8u^2+\frac{\kappa\kappa'}6u^3+O(u^4)\right)du
\\[4mm]
&=&\displaystyle 2t-\frac{\kappa^2}4t^3+O(t^5), 
\end{array}
\end{equation}
%
and hence 
\[
\varphi(t)=2-\frac{3\kappa^2}4t^2+O(t^4),
\]
which completes the proof. 
\end{proof}

Finally, we give a lemma which is needed for the proof of (4) of Theorem \ref{theorem1}. 

\begin{lemma}\label{lemma}
Let $\omega_1$ be the $1$-form on $\Gamma$ given by $\omega_1=d\vect x_1\cdot{\vect{\hat r}}_{12}$. Then we have 
\begin{equation}\label{f_lemma}
\frac{\,d\vect x_1\cdot d\vect x_2\,}{|\vect x_1-\vect x_2|^\a}
=\a\,\frac{(\vect{\hat r}_{12}\cdot d\vect x_1)(\vect{\hat r}_{12}\cdot d\vect x_2)}{|\vect x_1-\vect x_2|^\a}-d\left(\frac{\omega_1}{|\vect x_1-\vect x_2|^{\a-1}}\right).
%
\end{equation}
\end{lemma}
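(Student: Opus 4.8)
The plan is to read \eqref{f_lemma} as an identity of $2$-forms on the product $\Gamma\times\Gamma$ and to establish it by a direct computation of an exterior derivative. Writing $r:=|\vect x_1-\vect x_2|$ and $f:=r^{1-\a}$, the exact term on the right-hand side is precisely $d\bigl(f\,\omega_1\bigr)$ with $\omega_1=\vect{\hat r}_{12}\cdot d\vect x_1$, so the whole statement reduces to showing that
\[
d\bigl(f\,\omega_1\bigr)=\a\,\frac{(\vect{\hat r}_{12}\cdot d\vect x_1)(\vect{\hat r}_{12}\cdot d\vect x_2)}{r^\a}-\frac{d\vect x_1\cdot d\vect x_2}{r^\a}.
\]
First I would apply the Leibniz rule $d(f\,\omega_1)=df\wedge\omega_1+f\,d\omega_1$ and compute the two summands separately, treating $d$ as the full exterior derivative on $\Gamma\times\Gamma$, under which $dx_1^i$ comes from the first factor and $dx_2^i$ from the second.

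For the first summand, differentiating $r^2=\vect r_{12}\cdot\vect r_{12}$ with $\vect r_{12}=\vect x_2-\vect x_1$ gives $dr=\vect{\hat r}_{12}\cdot d\vect r_{12}=(\vect{\hat r}_{12}\cdot d\vect x_2)-(\vect{\hat r}_{12}\cdot d\vect x_1)$, hence $df=(1-\a)r^{-\a}\,dr$. Wedging with $\omega_1=\vect{\hat r}_{12}\cdot d\vect x_1$, the self-term $(\vect{\hat r}_{12}\cdot d\vect x_1)\wedge(\vect{\hat r}_{12}\cdot d\vect x_1)$ vanishes, leaving $dr\wedge\omega_1=(\vect{\hat r}_{12}\cdot d\vect x_2)\wedge(\vect{\hat r}_{12}\cdot d\vect x_1)$; reordering the two factors into the standard position produces a sign, so $df\wedge\omega_1=(\a-1)\,r^{-\a}\,(\vect{\hat r}_{12}\cdot d\vect x_1)(\vect{\hat r}_{12}\cdot d\vect x_2)$.

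For the second summand I expand in coordinates, $\omega_1=\sum_i\hat r_{12}^i\,dx_1^i$, so $d\omega_1=\sum_i d\hat r_{12}^i\wedge dx_1^i$. Differentiating $\hat r_{12}^i=(x_2^i-x_1^i)r^{-1}$ yields $d\hat r_{12}^i=r^{-1}(dx_2^i-dx_1^i)-r^{-1}\hat r_{12}^i\,dr$, and summing gives two pieces: $r^{-1}\sum_i(dx_2^i-dx_1^i)\wedge dx_1^i=-r^{-1}\,(d\vect x_1\cdot d\vect x_2)$ once the $dx_1^i\wedge dx_1^i$ terms are dropped, together with $-r^{-1}\,dr\wedge\omega_1=+r^{-1}(\vect{\hat r}_{12}\cdot d\vect x_1)(\vect{\hat r}_{12}\cdot d\vect x_2)$ by the previous step. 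Multiplying by $f=r^{1-\a}$ and adding $df\wedge\omega_1$, the two $\vect{\hat r}_{12}$-terms combine with coefficients $(\a-1)$ and $1$ to give $\a$, while the remaining piece is exactly $-r^{-\a}(d\vect x_1\cdot d\vect x_2)$; transposing this last term to the left reproduces \eqref{f_lemma}.

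I expect the only real obstacle to be the sign and orientation bookkeeping. Because $d\vect x_1$ and $d\vect x_2$ are pulled back from the two different factors of $\Gamma\times\Gamma$, their wedge is antisymmetric, and the identity comes out cleanly only after fixing the conventions that $d\vect x_1\cdot d\vect x_2$ and $(\vect{\hat r}_{12}\cdot d\vect x_1)(\vect{\hat r}_{12}\cdot d\vect x_2)$ stand for $\sum_i dx_1^i\wedge dx_2^i$ and $(\vect{\hat r}_{12}\cdot d\vect x_1)\wedge(\vect{\hat r}_{12}\cdot d\vect x_2)$, and then tracking consistently the sign flip incurred when $(\vect{\hat r}_{12}\cdot d\vect x_2)\wedge(\vect{\hat r}_{12}\cdot d\vect x_1)$ is rewritten in standard order. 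Apart from this, the argument is a routine Leibniz computation that holds for every exponent $\a$, which is why the same identity simultaneously delivers the equivalence of the Neumann and Weber integrands at $\a=1$.
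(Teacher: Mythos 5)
Your proof is correct, but it takes a genuinely different route from the paper's. The paper (adapting Proposition 4.11 of \cite{OS1}) works with an orthonormal moving frame $\{\vect e_1,\vect e_2,\vect e_3\}$ along $\Gamma$ with $\vect e_1=\vect{\hat r}_{12}$ and $\vect e_3\perp T_{\vect x_1}\Gamma$: it computes $d\omega_1$ from the structure forms $\omega_{ij}=d\vect e_i\cdot\vect e_j$, obtaining $d\omega_1=-\cos\tau\sin\theta_1\sin\theta_2\,dx_1\wedge dx_2/|\vect x_2-\vect x_1|$, and then closes the argument with the geometric decomposition \eqref{f_lemma_proof} of $d\vect x_1\cdot d\vect x_2$ into its $\cos\theta_1\cos\theta_2$ and $\cos\tau\sin\theta_1\sin\theta_2$ parts; moreover, since the paper converts $-d(|\vect x_2-\vect x_1|)\wedge\omega_1/|\vect x_2-\vect x_1|^{\a}$ into $\frac1{\a-1}\,d\left(|\vect x_2-\vect x_1|^{1-\a}\right)\wedge\omega_1$, i.e.\ takes an antiderivative in the chord length, it must split off $\a=1$ and treat that case separately via \eqref{f_lemma_forms} and \eqref{f_lemma_proof}. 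You avoid both the frame and the case split: you compute $d(r^{1-\a}\omega_1)$ by the Leibniz rule in Cartesian coordinates, differentiating $\hat r_{12}^i=(x_2^i-x_1^i)/r$ directly, so the factor $1-\a$ enters multiplicatively through $d(r^{1-\a})$ (at $\a=1$ it is simply $df=0$), and $d\omega_1=-r^{-1}\sum_i dx_1^i\wedge dx_2^i-r^{-1}dr\wedge\omega_1$ already yields the needed combination of the two integrands with no angles involved. Your sign bookkeeping is right: $dr\wedge\omega_1=(\vect{\hat r}_{12}\cdot d\vect x_2)\wedge(\vect{\hat r}_{12}\cdot d\vect x_1)=-(\vect{\hat r}_{12}\cdot d\vect x_1)\wedge(\vect{\hat r}_{12}\cdot d\vect x_2)$, so $df\wedge\omega_1$ and $f\,d\omega_1$ contribute $\a-1$ and $1$ respectively to the coefficient $\a$, and your insistence on fixing the convention $d\vect x_1\cdot d\vect x_2=\sum_i dx_1^i\wedge dx_2^i$ (with the analogous wedge reading of the Weber integrand) makes explicit what the paper leaves implicit. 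What your computation buys is elementarity and uniformity in $\a$; what the paper's frame computation buys is the intermediate expression of both integrands through the angles $\theta_1,\theta_2,\tau$, which carries independent geometric content and is the form exploited in the conformal-geometric setting of \cite{OS1}.
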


\begin{proof} 
It can be obtained by modifying the proof of Proposition 4.11 of \cite{OS1} (cf. Proposition 5 of \cite{banchoff.pohl}). 

Suppose $(\vect x_1, \vect x_2)$ belongs to $\Gamma\times\Gamma\setminus\Delta$, where $\Delta$ is the diagonal $\Delta=\{(x,x)\,:\,x\in\RR^3\}$ (the case when $(\vect x_1, \vect x_2)\in\Gamma_1\times\Gamma_2$ can be proved in the same way). 
Let $\tau$ be the angle between the two oriented planes containing the line $\overline{\vect x_1\vect x_2}$ tangent to $\Gamma$ at $\vect x_1$ and $\vect x_2$ respectively. 

Let $\{\vect e_1, \vect e_2, \vect e_3\}$ be an orthonormal moving frame (along $\Gamma$) with $\vect e_1=\vect{\hat r}_{12}$, and $\vect e_3\bot T_{\vect x_1}\Gamma$. 
Let $\omega_i=d\vect x_1\cdot \vect e_i$ and $\omega_{ij}=d\vect e_i\cdot \vect e_j$. 
Then there hold 
\[
\omega_2=\sin\theta_1\,d x_1, \>\>\omega_3=0, \>\> \omega_{12}=\cos\tau\sin\theta_2\,\frac{d x_2}{|\vect x_2-\vect x_1|}, 
\]
which implies
\begin{eqnarray}
d\omega_1&=&\omega_{12}\wedge\omega_{2}+\omega_{12}\wedge\omega_{3} 
=\displaystyle -\cos\tau\sin\theta_1\sin\theta_2\,\frac{dx_1\wedge dx_2}{|\vect x_2-\vect x_1|}. \label{f_lemma_forms}
\end{eqnarray}

\begin{enumerate}
\item The case $\a\ne1$. We have 
\[
\begin{array}{rcl}
\displaystyle \cos\theta_1\cos\theta_2\,\frac{dx_1\wedge dx_2}{|\vect x_2-\vect x_1|^\a}
&=&\displaystyle \frac{(\vect{\hat r}_{12}\cdot d\vect x_1)(\vect{\hat r}_{12}\cdot d\vect x_2)}{|\vect x_1-\vect x_2|^\a}\\[4mm]
&=&\displaystyle -\frac{d (|\vect x_2-\vect x_1|)\wedge\omega_{1}}{|\vect x_2-\vect x_1|^\a}
=\frac1{\a-1} \, d \left(\frac{1}{|\vect x_2-\vect x_1|^{\a-1}}\right)\wedge\omega_{1} \\[4mm]
&=&\displaystyle \frac1{\a-1} \, d \left(\frac{\omega_{1}}{|\vect x_2-\vect x_1|^{\a-1}}\right)
-\frac1{\a-1} \cdot \frac{1}{|\vect x_2-\vect x_1|^{\a-1}}\,d \omega_{1}\\[4mm]
&=&\displaystyle \frac1{\a-1}\cdot\cos\tau\sin\theta_1\sin\theta_2\,\frac{dx_1\wedge dx_2}{|\vect x_2-\vect x_1|^\a}
+\frac1{\a-1}\,d \left(\frac{\omega_{1}}{|\vect x_2-\vect x_1|^{\a-1}}\right),
\end{array}
\]
and therefore 
\[
\cos\tau\sin\theta_1\sin\theta_2\,\frac{dx_1\wedge dx_2}{|\vect x_2-\vect x_1|^\a}
=(\a-1)\cos\theta_1\cos\theta_2\,\frac{dx_1\wedge dx_2}{|\vect x_2-\vect x_1|^\a}
-d \left(\frac{\omega_{1}}{|\vect x_2-\vect x_1|^{\a-1}}\right).
\]
Since 
\begin{equation}\label{f_lemma_proof}
\frac{d\vect x_1\cdot d\vect x_2}{|\vect x_2-\vect x_1|^{\a}}=(\cos\theta_1\cos\theta_2+\cos\tau\sin\theta_1\sin\theta_2)\,\frac{dx_1\wedge dx_2}{|\vect x_2-\vect x_1|^{\a}},
\end{equation}
\eqref{f_lemma} follows. 

\item The case $\a=1$ follows from \eqref{f_lemma_forms} and \eqref{f_lemma_proof}.
\end{enumerate}
\end{proof}

\bigskip
\noindent
{\bfseries Proof of Theorem {\rm \bf \ref{theorem1}.}}
We prove only in the case of Neumann formula, as the argument goes parallel for Weber formula. 
We drop off the coefficient $\mu_0/(4\pi)$ in the proof to make formulae shorter and simpler. 

\medskip
(1) Since 
\[
\iint_{(\Gamma\times \Gamma)\setminus\Delta_\e}\,\frac{\,d\vect x_1\cdot d\vect x_2\,}{|\vect x_1-\vect x_2|}
=\int_\Gamma\left(\int_{\Gamma\setminus B_\e(\vect x_1)} \frac{\vect t_{\vect x_1}\cdot \vect t_{\vect x_2}}{|\vect x_1-\vect x_2|} \,dx_2\right)dx_1
\]
the regularization process can be reduced to that of $\int_\Gamma {|\vect x_1-\vect x_2|}^{-1} \,(\vect t_{\vect x_1}\cdot \vect t_{\vect x_2}) \,dx_2$. 
The assertion follows from \eqref{estimate_t_by_s} and \eqref{estimate_numerator_N} since the integrand can be estimated by 
\[
\frac{\vect t_{\vect x_1}\cdot \vect t_{\vect x_2}}{|\vect x_1-\vect x_2|}
=\frac1{|s|}+O(s). 
\]

\medskip
(2) 
Since 
\[
\iint_{\Gamma\times \Gamma}{|\vect x_1-\vect x_2|}^z \,d\vect x_1\cdot d\vect x_2
=\int_\Gamma\left(\int_\Gamma {|\vect x_1-\vect x_2|}^z \,(\vect t_{\vect x_1}\cdot \vect t_{\vect x_2}) \,dx_2\right)dx_1, 
\]
the regularization of $F_N(z)$ can be reduced to that of $\int_\Gamma {|\vect x_1-\vect x_2|}^z \,(\vect t_{\vect x_1}\cdot \vect t_{\vect x_2}) \,dx_2$. 

By the coarea formula (a kind of geometric version of Fubini's theorem), we have 
\[\begin{array}{rcl}
\displaystyle \int_\Gamma {|\vect x_1-\vect x_2|}^z \,(\vect t_{\vect x_1}\cdot \vect t_{\vect x_2}) \,dx_2 
&=&\displaystyle \int_0^d t^z \psi_{\vect x_1}'(t)\,dt
=\int_0^d t^z \varphi(t)\,dt. 
\end{array}
\]
Put 
\[
f(z):=\int_0^d t^z\,\varphi(t)\,dt, 
\]
which converges for $\Re z>-1$. Let $k$ be any natural number, and consider the right hand side\footnote{Here we divide the domain of integration at $1$. When $d<1$ we can take any $d_0$ with $0<d_0<d$ instead of $1$. The argument goes parallel.} of 
\begin{equation}\label{GS}
\begin{array}{rcl}
\displaystyle \int_0^d t^z\,\varphi(t)\,dt 
&=& \displaystyle \int_1^d t^z\,\varphi(t)\,dt
+\int_0^1t^z\left[\varphi(t)-\varphi(0)-\varphi'(0)t-\dots -\frac{\varphi^{(k-1)}(0)}{(k-1)!}\,t^{k-1}\right]\,dt 
 \\[4mm]
&& \displaystyle +\sum_{j=1}^{k}\int_0^1\frac{\varphi^{(j-1)}(0)}{(j-1)!}\,t^{z+j-1}\,dt.
\end{array}
\end{equation}
The first term is a holomorphic function of $z$. 
The integrand of second term can be estimated by $t^{z+k}$, hence the integral converges for $\Re z>-k-1$. 
Since 
\[
\int_0^1\frac{\varphi^{(j-1)}(0)}{(j-1)!}\,t^{z+j-1}\,dt=\frac{\varphi^{(j-1)}(0)}{(j-1)!\,(z+j)} \hspace{0.6cm}(z\ne-j)
\]
$f(z)$ is a meromorphic function on $\Re z>-k-1$ possibly with simple poles at $z=-1,\dots,-k$ with residues given by 
\begin{equation}\label{residue_t^wphi}
\Res(f,-j)=\frac{\varphi^{(j-1)}(0)}{(j-1)!} \hspace{0.6cm}(j=1,\dots,k)
\end{equation}
(\cite{GS} Ch.1, 3.2). 

Since $k\in\NN$ is arbitrary as $\varphi$ is smooth, and $\varphi^{(2i)}(0)=0$ $(i=0,1,2,\dots)$ by Lemma \ref{lemma_series_varphi}, this proves that $F_N(z)$ is a meromorphic function with possible simple poles at negative odd integers. 
Since $\varphi(0)=2$ by \eqref{series_varphi}, the residue at $z=-1$ is given by 
\[
\Res (F_N,-1)=\const\int_\Gamma \varphi(0)\, dx_1=\ciil. 
\]

\medskip
(3) The above argument can be paraphrased into Hadamard regularization as follows. 
Putting $k=1$ and $z=-1$ in \eqref{GS}, one obtains 
\begin{eqnarray}
\displaystyle \int_{\Gamma\setminus B_\e(\vect x_1)} \frac{\vect t_{\vect x_1}\cdot \vect t_{\vect x_2}}{|\vect x_1-\vect x_2|} \,dx_2
&=& \displaystyle \int_\e^d t^{\,-1}\,\varphi(t)\,dt \nonumber\\
&=& \displaystyle \int_1^d t^{\,-1}\,\varphi(t)\,dt
+\int_\e^1t^{\,-1}\left[\varphi(t)-\varphi(0)\right]\,dt 
\displaystyle +{\varphi(0)}\int_\e^1t^{\,-1}\,dt, \hspace{0.8cm}{ } \label{GS_Hadamard_z=-1}
\end{eqnarray}
%
Since 
\[
\int_0^1t^z\,dt=\frac1{z+1} \hspace{0.8cm}(z\ne-1),
\]
the residue of $z\mapsto \int_0^1t^z\,dt$ at $z=-1$ is equal to $1$. 
Camparing \eqref{GS} with $k=1$ and \eqref{GS_Hadamard_z=-1}, the first two terms in the right hand sides coincide since the integrals converge. 
The equality $H_N(\Gamma)=A_N(\Gamma)$ follows from 
\[
\lim_{z\to-1}\left(\int_0^1t^z\,dt-\frac1{z+1}\right)
=\lim_{\e\to0^+}\left(\int_\e^1\frac{dt}t+\log\e\right).
\]

\medskip
(4) For a sufficiently small $\e>0$, $B_\e(\vect x)\cap\Gamma$ consists of a single curve segment for any $\vect x\in\Gamma$. We remark that the condition is satisfied if $\e$ is smaller than the {\em thickness} of $\Gamma$ (see, for exmaple, \cite{LSDR} for the definition of thickness of a knot). 
Let $\vect x_{\e,+}$ and $\vect x_{\e,-}$ be the endpoints of the curve $B_\e(\vect x)\cap\Gamma$, where we assume $\vect x_{\e,+}$ is a little bit ahead of $\vect x$ with respect to the orientation of $\Gamma$. 

Then the boundary of $\Gamma\times\Gamma\setminus\Delta_\e$ consists of two disjoint curves on $\Gamma\times \Gamma$ given by 
\[
\Gamma_{\e,\pm}:=\{(\vect x, \vect x_{\e,\pm})\,:\,\vect x\in\Gamma\}.
\]
Suppose $\Gamma_{\e,\pm}$ are endowed with the same orientation as $\Gamma$. 
Then the boundary of $\Gamma\times\Gamma\setminus\Delta_\e$ is given by 
\[
\partial\left(\Gamma\times\Gamma\setminus\Delta_\e\right)=\Gamma_{\e,+}\cup(-\Gamma_{\e,-}),
\]
where $-\Gamma_{\e,-}$ is endowed with the reverse orientation. 
Since on $\Gamma_{\e,+}$ and $\Gamma_{\e,-}$
\[
\omega_1=d\vect x_1\cdot\frac{\vect x_2-\vect x_1}{|\vect x_2-\vect x_1|}=\left(1+O(\e^2)\right)\,dx_1,
\]
we have 
\[
\begin{array}{rcl}
\displaystyle \iint_{(\Gamma\times \Gamma)\setminus\Delta_\e} \left(\frac{\,(\vect{\hat r}_{12}\cdot d\vect x_1)(\vect{\hat r}_{12}\cdot d\vect x_2)\,}{|\vect x_1-\vect x_2|}
-\frac{\,d\vect x_1\cdot d\vect x_2\,}{|\vect x_1-\vect x_2|}\right)
&=&\displaystyle \iint_{(\Gamma\times \Gamma)\setminus\Delta_\e} d\omega_1\\[5mm]
&=&\displaystyle \int_{\Gamma_{\e,+}\cup\,(-\Gamma_{\e,-})}\omega_1 \\[5mm]
&=&\displaystyle 2\mathcal{L}+O(\e^2).
\end{array}
\]
\vspace{-0.5cm}
\begin{flushright}
{\small{$\square$}}
\end{flushright}
\bigskip

We close this subsection with giving a property and an example of the self-inductance. 

\begin{proposition}
The regularized self-inductance behaves under homothety as follows. 
\[
%
H_N(\lambda\Gamma)=\lambda H_N(\Gamma)+\ciil \cdot \lambda\,(\log \lambda) \hspace{0.8cm}(\lambda>0). 
\]
\end{proposition}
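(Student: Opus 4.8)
The plan is to derive the scaling law by a direct change of variables in the Hadamard-regularized integral \eqref{Hadamard_N}, exploiting the fact that $\lambda\Gamma$ is the image of $\Gamma$ under the homothety $\vect x\mapsto\lambda\vect x$. First I would parametrize the points of $\lambda\Gamma$ as $\vect y_i=\lambda\vect x_i$ with $\vect x_i\in\Gamma$, and record that the length of $\lambda\Gamma$ is $\lambda\mathcal{L}$, so that the counter term attached to $\lambda\Gamma$ in \eqref{Hadamard_N} is $\lambda\,\ciil\,\log\e$ rather than $\ciil\,\log\e$.

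Next I would track how the integrand and the excluded region transform. Under $\vect y_i=\lambda\vect x_i$ one has $d\vect y_i=\lambda\,d\vect x_i$ and $|\vect y_1-\vect y_2|=\lambda\,|\vect x_1-\vect x_2|$, so the numerator $d\vect y_1\cdot d\vect y_2$ acquires a factor $\lambda^2$ while the denominator acquires a factor $\lambda$; hence the whole kernel scales by $\lambda$. At the same time the excluded set $\Delta_\e=\{|\vect y_1-\vect y_2|\le\e\}$ for $\lambda\Gamma$ pulls back to $\{|\vect x_1-\vect x_2|\le\e/\lambda\}=\Delta_{\e/\lambda}$ on $\Gamma\times\Gamma$. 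Writing $I(\delta):=\iint_{(\Gamma\times\Gamma)\setminus\Delta_\delta}(d\vect x_1\cdot d\vect x_2)/|\vect x_1-\vect x_2|$, this gives
\[
\iint_{(\lambda\Gamma\times\lambda\Gamma)\setminus\Delta_\e}\frac{\,d\vect y_1\cdot d\vect y_2\,}{|\vect y_1-\vect y_2|}=\lambda\,I(\e/\lambda).
\]

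The only genuinely delicate point is the bookkeeping of the logarithmic counter term, which I would handle through the substitution $\delta=\e/\lambda$. Feeding the scaled integral and the scaled counter term into the definition \eqref{Hadamard_N} of $H_N(\lambda\Gamma)$ and using $\log\e=\log\lambda+\log\delta$, one obtains
\[
H_N(\lambda\Gamma)=\lim_{\delta\to0^+}\left(\const\,\lambda\,I(\delta)+\lambda\,\ciil(\log\lambda+\log\delta)\right).
\]
Splitting off the $\lambda\,\ciil\log\lambda$ term, which is independent of $\delta$, and recognizing the remaining limit as $\lambda$ times the defining limit for $H_N(\Gamma)$, then yields
\[
H_N(\lambda\Gamma)=\lambda\,H_N(\Gamma)+\ciil\cdot\lambda\,(\log\lambda),
\]
as claimed; existence of the limit for $\lambda\Gamma$ is inherited from its existence for $\Gamma$, established in part (1) of Theorem \ref{theorem1}. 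I expect no real obstacle beyond keeping the powers of $\lambda$ straight, since the argument rests purely on the scaling homogeneity of the kernel together with the additive way $\log\e$ decomposes; the identical computation applies verbatim to $H_W$, where the extra factor $(\vect{\hat r}_{12}\cdot d\vect x_1)(\vect{\hat r}_{12}\cdot d\vect x_2)$ is scale invariant in $\vect{\hat r}_{12}$ and again contributes one net power of $\lambda$, so the Weber version obeys the same law.
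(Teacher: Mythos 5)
Your proof is correct and is essentially the paper's own argument: the paper simply states that the proposition is immediate from the definition \eqref{Hadamard_N}, and your change of variables $\vect y_i=\lambda\vect x_i$, the resulting identity $\iint_{(\lambda\Gamma\times\lambda\Gamma)\setminus\Delta_\e}=\lambda\,I(\e/\lambda)$, and the decomposition $\log\e=\log\lambda+\log\delta$ are exactly the computation being left implicit. Your remark that the same bookkeeping applies verbatim to $H_W$ is also correct, since the Weber kernel likewise scales by a single power of $\lambda$.
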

This is immediate from the definition \eqref{Hadamard_N}. 

\begin{example} \rm 
The regularized self-inductance of a unit circle $\Gamma_\circ$ is given by 
\[
\begin{array}{rcl}
H_N(\Gamma_\circ)&=&\displaystyle 2\pi\lim_{\e\to0^+}
\left(\const \int_\e^{2\pi-\e}\frac{\cos\theta}{2\sin\frac\theta2}\,d\theta+\cii \log\e
\right) 
=\frac{(-1+\log2)\,\mu_0}\pi.
\end{array}
\]
\end{example}

\subsection{Regularization with parallel loops}

\begin{theorem}\label{theorem_parallel} 
Let $\Gamma$ be a smooth\footnote{We assume the smoothness for the sake of simplicity. In this case we need $C^4$.} simple loop with non-vanishing curvature. 
Let $\Gamma_\delta$ be a $\delta$-parellel curve given by $\Gamma_\delta=\{\vect x+\delta \vect n(\vect x)\,:\,\vect x\in \Gamma\}$, where $\vect n$ is the unit principal normal vector to $\Gamma$. Then 
\begin{equation}\label{f_paralle}
%
\lim_{\delta\to0^+}\left(L_N(\Gamma, \Gamma_\delta)+\ciil\log\delta\right)
=H_N(\Gamma)+\frac{(\log2)\,\mu_0\mathcal{L}}{2\pi}.
\end{equation}
\end{theorem}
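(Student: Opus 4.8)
The plan is to compute $L_N(\Gamma,\Gamma_\delta)$ directly by integrating along $\Gamma$ and, for each fixed $\vect x_1\in\Gamma$, along $\Gamma_\delta$, then isolate the logarithmically divergent contribution near the ``closest approach'' and show its regularized remainder matches $H_N(\Gamma)$ plus the claimed constant. The crucial geometric input is the relation between arc-length on $\Gamma$ and chord length to points on the parallel curve $\Gamma_\delta$, which is why the hypothesis now requires non-vanishing curvature and $C^4$ smoothness. First I would fix $\vect x_1=\gamma(s_1)$ and parametrize the nearby point $\vect x_2^\delta=\gamma(s_1+s)+\delta\,\vect n(\gamma(s_1+s))$ on $\Gamma_\delta$. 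Using the Frenet--Serret expansion already recorded in the proof of Lemma \ref{lemma_series_s<->t}, I would expand the chord length $|\vect x_1-\vect x_2^\delta|$ and the inner product of tangent vectors $\vect t_{\vect x_1}\cdot\vect t_{\vect x_2^\delta}$ jointly in $s$ and $\delta$. The point is that, unlike the single-loop case where the chord vanishes linearly in $s$, here the chord length has a nonzero floor of order $\delta$ at $s=0$ (since $\vect x_1$ and its footpoint-plus-$\delta\vect n$ are genuinely separated), so the integrand $|\vect x_1-\vect x_2^\delta|^{-1}(\vect t_{\vect x_1}\cdot\vect t_{\vect x_2^\delta})$ stays finite for every $\delta>0$ and the divergence emerges only as $\delta\to0^+$.

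The main computation is to analyze, for fixed $\vect x_1$, the inner integral
\[
\int_\Gamma \frac{\vect t_{\vect x_1}\cdot\vect t_{\vect x_2^\delta}}{|\vect x_1-\vect x_2^\delta|}\,dx_2
\]
as $\delta\to0^+$. I would split the integral into a region $|s|\le \eta$ near the closest point and its complement, with $\eta$ a small fixed constant. On the outer region the integrand converges as $\delta\to0^+$ to the single-loop integrand, and the difference is $O(\delta)$ uniformly. On the inner region I expect the leading behavior $|\vect x_1-\vect x_2^\delta|^2 \approx s^2+\delta^2$ (the curvature and higher-order terms contributing only to the finite part), so the core model integral is of the form $\int_{-\eta}^{\eta}(s^2+\delta^2)^{-1/2}\,ds$, which produces $2\log(1/\delta)$ plus $2\log(2\eta)$ plus corrections. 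Matching this against the single-loop Hadamard model $\int_\epsilon^\eta s^{-1}\,ds$ is exactly where the constant $(\log 2)\mu_0\mathcal L/(2\pi)$ will appear: the $\log 2$ comes from comparing the smooth cutoff $(s^2+\delta^2)^{1/2}$ against the sharp cutoff $|s|=\epsilon$, i.e. from $\int_{-\eta}^{\eta}(s^2+\delta^2)^{-1/2}ds - 2\int_\delta^\eta s^{-1}ds \to 2\log 2$.

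I expect the main obstacle to be controlling the subleading terms in the inner region with enough care to confirm that only the stated constant survives and no spurious $\delta$-independent remainder slips in. Concretely, I would have to verify that the curvature contribution of order $s^2$ to the chord length and the order-$s^2$ correction to $\vect t_{\vect x_1}\cdot\vect t_{\vect x_2^\delta}$ integrate to something that converges to the corresponding finite-part contribution defining $H_N(\Gamma)$, rather than leaking an extra constant into the limit; this is a bookkeeping task of separating the genuinely convergent cross terms from the two logarithmic pieces. The cleanest route is probably to write
\[
L_N(\Gamma,\Gamma_\delta)+\ciil\log\delta
=\Bigl(L_N(\Gamma,\Gamma_\delta)-\text{(single-loop Hadamard integrand with cutoff }\delta)\Bigr)+H_N(\Gamma),
\]
so that $H_N(\Gamma)$ is produced by design via part (1) of Theorem \ref{theorem1}, and the entire task reduces to evaluating the explicit difference of the two model integrals, whose limit is the single universal constant $2\log 2$ integrated against $dx_1$ over $\Gamma$, giving the factor $\mathcal L$. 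The non-vanishing curvature hypothesis guarantees $\Gamma_\delta$ is a genuine embedded curve for small $\delta$ and that the footpoint correspondence is well-behaved, legitimizing the parametrization throughout.
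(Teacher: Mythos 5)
Your proposal is correct and takes essentially the same approach as the paper's proof: both isolate a near-diagonal region, evaluate the model integral $\int(s^2+\delta^2)^{-1/2}\,ds$ against the sharp-cutoff integral $\int s^{-1}\,ds$ to extract the constant $2\log 2$ per unit length, and match the outer region with the Hadamard-truncated single-loop integral so that $H_N(\Gamma)$ appears by definition. The only cosmetic difference is that you cut at arc-length $|s|\le\eta$ and compare with the cutoff $\delta$ directly, whereas the paper cuts at the chord-distance ball $B_\e(\vect x_1)$ using two scales $\delta\ll\e$; the harmless discrepancy between arc-length and chord-length cutoffs is controlled by \eqref{estimate_t_by_s} of Lemma \ref{lemma_series_s<->t}, as the paper itself notes.
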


We remark that if $\delta$ is smaller than the thickness of $\Gamma$ then $\Gamma_\delta\cap\Gamma=\emptyset$.

\begin{proof}
We drop off the coefficient $\mu_0/(4\pi)$ in the proof to make formulae shorter and simpler. 
The proof of Proposition 4.18 of \cite{OS1} goes parallel with slight modification. 
We estimate, fixing $\vect x_1$ on $\Gamma$, 
\begin{equation}\label{f_parallel_local}
\int_{\Gamma_\delta\cap B_\e(\vect x_1)}\frac{\vect t_{\vect x_1}\cdot \vect t_{\vect x_2}}{|{\vect x_1}-{\vect x_2}|}\,dx_2
\end{equation}
for $0<\delta\ll\e\ll1$.

Suppose $\Gamma$ is parametrized by the arc-length as $\Gamma=\{\gamma(s)\}_{0\le s\le\mathcal{L}}$. Then $\Gamma_\delta$ can be expressed as  $\Gamma_\delta=\{\gamma_\delta(s)\}_{0\le s\le\mathcal{L}}$, where 
$\gamma_\delta(s)=\gamma(s)+\delta \kappa(s)^{-1}\gamma''(s).$ 
Let $\vect x_1=\gamma(s_1)$ and $\kappa$ be the curvature of $\Gamma$ at $\vect x_1$. 
We have
\[
\begin{array}{rcl}
\displaystyle \gamma'(s_1)\cdot \gamma_\delta'(s_1+s)&=&\displaystyle (1-\kappa\delta)+O(1)\delta s+O(s^2),  \\[2mm]
\displaystyle |\gamma_\delta(s_1+s)-\gamma(s_1)|^2&=&\displaystyle  \left(\delta^2+(1-\kappa\delta)s^2\right)\left(1+O(1)s^2\right).
\end{array}
\]
Let $s_1\pm s_\pm$ be the values of parameter when $\Gamma_\delta$ passes through the sphere $\partial B_\e(\vect x_1)$. 
Then 
\[
s_\pm=\pm\sqrt{\frac{\e^2-\delta^2}{1-\kappa\delta}}+O(\e^3). 
\]
The above equalities imply that \eqref{f_parallel_local} can be estimated by 
\[\begin{array}{l}
\displaystyle \int_{-\sqrt{\frac{\e^2-\delta^2}{1-\kappa\delta}}\,+O(\e^3)}^{\sqrt{\frac{\e^2-\delta^2}{1-\kappa\delta}}\,+O(\e^3)}
\frac{\left[(1-\kappa\delta)+O(1)\delta s+O(1)s^2\right]\left(1+O(1)s^2\right)}
{\sqrt{\delta^2+(1-\kappa\delta)s^2}}\,ds \\[6mm]
=\displaystyle 2\int_0^{\sqrt{\frac{\e^2-\delta^2}{1-\kappa\delta}}}\>
\frac{1-\kappa\delta}{\sqrt{\delta^2+(1-\kappa\delta)s^2}}\,ds+O(\e^2)+o(\delta) \\[6mm]
=\displaystyle 2\sqrt{1-\kappa\delta}\,\log\left(
\frac{\e+\sqrt{\e^2-\delta^2}}{\delta}
\right)
+O(\e^2)+o(\delta) \\[5mm] 
=\displaystyle 2\log2+2\log\e-2\log\delta+O(\e). 
\end{array}\]
Snce $\delta\ll\e$, it follows that 
\[
\begin{array}{rcl}
\displaystyle \int_\Gamma\int_{\Gamma_\delta}\frac{d\vect x_1\cdot d\vect x_2}{|\vect x_1-\vect x_2|}
&=&\displaystyle \int_\Gamma\left(
\int_{\Gamma_\delta\cap B_\e(\vect x_1)}\frac{\vect t_{\vect x_1}\cdot \vect t_{\vect x_2}}{|{\vect x_1}-{\vect x_2}|}\,dx_2
+\int_{\Gamma_\delta\setminus B_\e(\vect x_1)}\frac{\vect t_{\vect x_1}\cdot \vect t_{\vect x_2}}{|{\vect x_1}-{\vect x_2}|}\,dx_2

\right)dx_1 \\[6mm]
&=&\displaystyle \int_\Gamma\left(2\log2+2\log\e-2\log\delta
+\int_{\Gamma\setminus B_\e(\vect x_1)}\frac{\vect t_{\vect x_1}\cdot \vect t_{\vect x_2}}{|{\vect x_1}-{\vect x_2}|}\,dx_2
\right)dx_1 +O(\e) \\[6mm]
&=&\displaystyle \iint_{(\Gamma\times\Gamma)\setminus\Delta_\e}\frac{d\vect x_1\cdot d\vect x_2}{|\vect x_1-\vect x_2|}
+2\mathcal{L}\log\e+2(\log2)\mathcal{L}-2\mathcal{L}\log\delta +O(\e), 
\end{array}
\]
which means 
\[
\int_\Gamma\int_{\Gamma_\delta}\frac{d\vect x_1\cdot d\vect x_2}{|\vect x_1-\vect x_2|}
+2\mathcal{L}\log\delta
-2(\log2)\mathcal{L}
=
\iint_{(\Gamma\times\Gamma)\setminus\Delta_\e}\frac{d\vect x_1\cdot d\vect x_2}{|\vect x_1-\vect x_2|}
+2\mathcal{L}\log\e
+O(\e).
\]
Taking the limit as $\e$ goes down to $0$ and multiplying the both sides by $\mu_0/(4\pi)$, we obtain \eqref{f_paralle}. 
%
\if0 
Therefore 
\[
\begin{array}{rcl}
H_N(\Gamma)&=&\displaystyle \lim_{\e\to0^+}\left(\int_{(\Gamma\times \Gamma)\setminus\Delta_\e}\,\frac{\,d\vect x_1\cdot d\vect x_2\,}{|\vect x_1-\vect x_2|}+2\mathcal{L}\log\e\right) \\[6mm]
&=&\displaystyle \lim_{\delta\to0^+}\left(\int_\Gamma\int_{\Gamma_\delta}\frac{d\vect x_1\cdot d\vect x_2}{|\vect x_1-\vect x_2|}
+2\mathcal{L}\log\delta
\right)-2(\log2)\mathcal{L},
\end{array}
\]
which completes the proof of \eqref{f_paralle}. 
\fi 
\end{proof}

\subsection{Hadamard regularization in terms of the arc-length parameter}
We introduce formulae of regularized self-inductance obtained by Hadamard regularization in terms of the arc-length parameter. They are more suitable for numerical experiments, and one of them will be used in the next subsection. 

In \eqref{Hadamard_N} and \eqref{Hadamard_W} we used the {\em chord-length} i.e. the distance in $\RR^3$ to define an ``{\sl $\e$-neighbourhood}\,'' $\Delta_\e$ of the diagonal set $\Delta$ of $\Gamma\times\Gamma$, which we use in Hadamard regularization. 
However, since it is easier to express a curve by the arc-length than by the chord-length, it is useful to have the formulae of Hadamard regularization in terms of the arc-length parametrization. 
In this setting, $\Delta_\e$ should be replaced by 
\[
\widetilde\Delta_\e:=\{(\vect x_1, \vect x_2)\in\Gamma\times\Gamma\,:\,d_\Gamma(\vect x_1,\vect x_2)\le\e\},
\]
where $d_\Gamma$ is the arc-length along $\Gamma$. Nevertheless, the counter terms and Hadamard's finite parts do not change, which follows from the equalities \eqref{estimate_t_by_s}\,--\,\eqref{estimate_numerator_W}, namely, 
\[
\begin{array}{rcl}
\displaystyle  H_N(\Gamma)&=&\displaystyle \lim_{\e\to0^+}\left(\const \iint_{(\Gamma\times \Gamma)\setminus\widetilde\Delta_\e}\,\frac{\,d\vect x_1\cdot d\vect x_2\,}{|\vect x_1-\vect x_2|}+\ciil\log\e\right),  \\[5mm]
\displaystyle H_W(\Gamma)&=&\displaystyle \lim_{\e\to0^+}\left(\const \iint_{(\Gamma\times \Gamma)\setminus\widetilde\Delta_\e}\,\frac{\,(\vect{\hat r}_{12}\cdot d\vect x_1)(\vect{\hat r}_{12}\cdot d\vect x_2)\,}{|\vect x_1-\vect x_2|}+\ciil\log\e\right). 
\end{array}
\]
We remark that such a phenomenon can be observed when the power of denominator in the integrand is smaller than $3$, as was explained for a similar functional in Remark 2.2.1 of \cite{O1}.

Next, to have faster convergence in numerical experments, it is better to increase the number of counter terms. The series expansion in $\e$, after dropping off the constant $\mu_0/(4\pi)$, is given by 

\begin{eqnarray}
&&\displaystyle  \iint_{(\Gamma\times \Gamma)\setminus\widetilde\Delta_\e}\,\frac{\,d\vect x_1\cdot d\vect x_2\,}{|\vect x_1-\vect x_2|}
=\displaystyle 2\mathcal{L}\log\frac1\e 
+\frac{11}{24}\left(\int_\Gamma \kappa(x)^2\,dx\right)\e^2 +O(\e^4), \label{Hadamard_N_2} \\[1mm]
&&\displaystyle \iint_{(\Gamma\times \Gamma)\setminus\widetilde\Delta_\e}\,\frac{\,(\vect{\hat r}_{12}\cdot d\vect x_1)(\vect{\hat r}_{12}\cdot d\vect x_2)\,}{|\vect x_1-\vect x_2|}
=\displaystyle 2\mathcal{L}\log\frac1\e
+\frac{5}{24}\left(\int_\Gamma \kappa(x)^2\,dx\right)\e^2 +O(\e^4). \nonumber 
\end{eqnarray}
Let us show that \eqref{Hadamard_N_2} follows from \eqref{estimate_t_by_s} and \eqref{estimate_numerator_N}. 
Fix a point $\vect{x}_1$. 
For a small positive number $b$ and $\e$ with $0<\e< b$, 
\[\begin{array}{l}
\displaystyle \int_{\e\le d_\Gamma(\vect x_1, \vect x_2)\le b}\frac{\vect t_{\vect x_1}\cdot \vect t_{\vect x_2}}{|\vect x_1-\vect x_2|}\,dx_2\\[4mm]
=\displaystyle \int_\e^b 
\left(1-\frac{\kappa^2}2s^2-\frac{\kappa\kappa'}2s^3+O(s^4)\right)\left[s\left(1-\frac{\kappa^2}{24}\,s^2-\frac{\kappa\kappa'}{24}s^3+O(s^4)\right)\right]^{-1}\,ds \\[4mm]
\displaystyle \phantom{=} +\int_{-b}^{-\e}
\left(1-\frac{\kappa^2}2s^2-\frac{\kappa\kappa'}2s^3+O(s^4)\right)\left[-s\left(1-\frac{\kappa^2}{24}\,s^2-\frac{\kappa\kappa'}{24}s^3+O(s^4)\right)\right]^{-1}\,ds \\[4mm]
\displaystyle =O(1)-2\log\e+\frac{11}{24}\kappa^2\cdot\e^2+O(\e^4).
\end{array}\]

\if0 

The equalities \eqref{estimate_t_by_s}\,--\,\eqref{estimate_numerator_W} imply that even if we use the arc-length instead of the distance in $\RR^3$ when we define an ``{\sl $\e$-neighbourhood}\,'' of $\Delta$ in Hadamard regularization, we obtain the same quantities in our case\footnote{This statement holds as far as the power of the denominator is smaller than $3$, as was explained for a similar functional in Remark 2.2.1 of \cite{O1}}. 

\begin{proposition}\label{cor_Hadamard_al=dist}

%
\end{proposition}

\begin{proof}
The equalities \eqref{estimate_t_by_s}\,--\,\eqref{estimate_numerator_W} imply that for any point $\vect x_1\in\Gamma$ we have 
\begin{equation}\label{f_Hadamard_al=dist}
\lim_{\e\to0^+}\left(\int_{d_\Gamma(\vect x_1,\vect x_2)\ge\e}\frac{\vect t_{\vect x_1}\cdot\vect t_{\vect x_2}}{|\vect x_1-\vect x_2|}\,dx_2+2\log\e\right)
=\lim_{\e\to0^+}\left(\int_{|\vect x_1-\vect x_2|\ge\e}\frac{\vect t_{\vect x_1}\cdot\vect t_{\vect x_2}}{|\vect x_1-\vect x_2|}\,dx_2+2\log\e\right), \nonumber
\end{equation}

\begin{equation}\label{f_Hadamard_al=dist_W}
\lim_{\e\to0^+}\left(\int_{d_\Gamma(\vect x_1,\vect x_2)\ge\e}\frac{\cos\theta_1\cos\theta_2}{|\vect x_1-\vect x_2|}\,dx_2+2\log\e\right)
=\lim_{\e\to0^+}\left(\int_{|\vect x_1-\vect x_2|\ge\e}\frac{\cos\theta_1\cos\theta_2}{|\vect x_1-\vect x_2|}\,dx_2+2\log\e\right).  \nonumber
\end{equation}
\end{proof}

\begin{remark}\label{remark_numerical}\rm 
For numerical experiments to calculate $H_N(\Gamma)$ and $H_W(\Gamma)$ it is better to use 
\begin{eqnarray}
&&\displaystyle \const \iint_{(\Gamma\times \Gamma)\setminus\widetilde\Delta_\e}\,\frac{\,d\vect x_1\cdot d\vect x_2\,}{|\vect x_1-\vect x_2|}+\ciil\log\e 
-\frac{11\mu_0}{96\pi}\left(\int_\Gamma \kappa(x)^2\,dx\right)\e^2,
\label{numerical_Neumann} \\
&&\displaystyle \const\iint_{(\Gamma\times \Gamma)\setminus\widetilde\Delta_\e}\,\frac{\,(\vect{\hat r}_{12}\cdot d\vect x_1)(\vect{\hat r}_{12}\cdot d\vect x_2)\,}{|\vect x_1-\vect x_2|}+\ciil\log\e 
-\frac{5\mu_0}{96\pi}\left(\int_\Gamma \kappa(x)^2\,dx\right)\e^2. \label{numerical_Weber}
\end{eqnarray}
since they converge faster than \eqref{Hadamard_N} and \eqref{Hadamard_W}. 
\end{remark}

\begin{eqnarray}
\displaystyle  H_N(\Gamma)&=&\displaystyle \lim_{\e\to0^+}\left(\const \iint_{(\Gamma\times \Gamma)\setminus\widetilde\Delta_\e}\,\frac{\,d\vect x_1\cdot d\vect x_2\,}{|\vect x_1-\vect x_2|}+\ciil\log\e 
-\frac{11\mu_0}{96\pi}\left(\int_\Gamma \kappa(x)^2\,dx\right)\e^2 \right), \label{Hadamard_N_2} \\
\displaystyle H_W(\Gamma)&=&\displaystyle \lim_{\e\to0^+}\left(\const \iint_{(\Gamma\times \Gamma)\setminus\widetilde\Delta_\e}\,\frac{\,(\vect{\hat r}_{12}\cdot d\vect x_1)(\vect{\hat r}_{12}\cdot d\vect x_2)\,}{|\vect x_1-\vect x_2|}+\ciil\log\e
-\frac{5\mu_0}{96\pi}\left(\int_\Gamma \kappa(x)^2\,dx\right)\e^2 \right). \nonumber \\
&&   \label{Hadamard_W_2}
\end{eqnarray}

\begin{eqnarray}
\displaystyle  H_N(\Gamma)&=&\displaystyle \const \iint_{(\Gamma\times \Gamma)\setminus\widetilde\Delta_\e}\,\frac{\,d\vect x_1\cdot d\vect x_2\,}{|\vect x_1-\vect x_2|}+\ciil\log\e 
-\frac{11\mu_0}{96\pi}\left(\int_\Gamma \kappa(x)^2\,dx\right)\e^2 +O(\e^4), \label{Hadamard_N_2} \\[1mm]
\displaystyle H_W(\Gamma)&=&\displaystyle \const \iint_{(\Gamma\times \Gamma)\setminus\widetilde\Delta_\e}\,\frac{\,(\vect{\hat r}_{12}\cdot d\vect x_1)(\vect{\hat r}_{12}\cdot d\vect x_2)\,}{|\vect x_1-\vect x_2|}+\ciil\log\e
-\frac{5\mu_0}{96\pi}\left(\int_\Gamma \kappa(x)^2\,dx\right)\e^2 +O(\e^4). \nonumber \\
&&   \label{Hadamard_W_2}
\end{eqnarray}

\begin{eqnarray}
&&\displaystyle  \const \iint_{(\Gamma\times \Gamma)\setminus\widetilde\Delta_\e}\,\frac{\,d\vect x_1\cdot d\vect x_2\,}{|\vect x_1-\vect x_2|}
=\displaystyle \ciil\log\frac1\e 
+\frac{11\mu_0}{96\pi}\left(\int_\Gamma \kappa(x)^2\,dx\right)\e^2 +O(\e^4), \label{Hadamard_N_2} \\[1mm]
&&\displaystyle \const \iint_{(\Gamma\times \Gamma)\setminus\widetilde\Delta_\e}\,\frac{\,(\vect{\hat r}_{12}\cdot d\vect x_1)(\vect{\hat r}_{12}\cdot d\vect x_2)\,}{|\vect x_1-\vect x_2|}
=\displaystyle \ciil\log\frac1\e
+\frac{5\mu_0}{96\pi}\left(\int_\Gamma \kappa(x)^2\,dx\right)\e^2 +O(\e^4). \label{Hadamard_W_2} 
\end{eqnarray}
\fi 

\subsection{The self-inductance of solenoids}
Let us consider a solenoid $\Gamma_n=\Gamma_{r,\ell,n}$ that winds a cylinder of radius $r$ and length $\ell$ with a number of turns per unit length $n$ carrying a current $I$. 
Define the regularized self-inductance of $\Gamma$ in the sense of Neumann and of Weber by \eqref{Hadamard_N} and \eqref{Hadamard_W}. 

\begin{theorem}\label{thm_solenoids}
The asymptotic of the regularized self-inductances as $n$ goes to infinity is given by 
\begin{equation}\label{solenoid}
\begin{array}{l}
\displaystyle \lim_{n\to\infty}\frac{H_N(\Gamma_{r,\ell,n})}{n^2}
=\displaystyle \lim_{n\to\infty}\frac{H_W(\Gamma_{r,\ell,n})}{n^2} \\[4mm]
=\displaystyle \frac{8\mu_0}3\left[
-r^3+\frac18\left(
-\ell\left(\ell^2-4r^2\right)E\left(-\frac{4r^2}{\ell^2}\right)
+\ell\left(\ell^2+4r^2\right)K\left(-\frac{4r^2}{\ell^2}\right)
\right)
\right],
\end{array}\end{equation}
where $K(k)$ and $E(k)$ are the complete elliptic integrals of the first and second kind respectively:
\begin{equation}\label{elliptic}
E(k)=\int_0^{\frac\pi2}\sqrt{1-k\sin^2(t)}\,dt, \hspace{0.4cm}
K(k)=\int_0^{\frac\pi2}\frac1{\sqrt{1-k\sin^2(t)}}\,dt. \nonumber
\end{equation}

Let \eqref{solenoid} be denoted by $L_{r,\ell}$. 
The asymptotic of $L_{r,\ell}$ as $\ell$ goes to infinity is given by 
\begin{equation}\label{solenoid-l}
\lim_{\ell\to\infty}\left(
L_{r,\ell}-\mu_0\left(\pi r^2\ell-\frac83r^3
\right)
\right)=0.
\end{equation}
\end{theorem}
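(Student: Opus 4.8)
The plan is to establish \eqref{solenoid} by a homogenization argument that replaces the tightly wound helix by an effective cylindrical current sheet, and then to obtain \eqref{solenoid-l} by expanding the complete elliptic integrals for a vanishing modulus.

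The equality of the two limits in \eqref{solenoid} is immediate: by assertion (4) of Theorem \ref{theorem1} one has $H_W(\Gamma_n)-H_N(\Gamma_n)=\cii\,\mathcal{L}$, and since the length of the solenoid is $\mathcal{L}=\ell\sqrt{4\pi^2n^2r^2+1}=O(n)$, this difference is $o(n^2)$. It therefore suffices to compute $\lim_{n\to\infty}H_N(\Gamma_n)/n^2$. Parametrizing $\Gamma_n$ by height through $\gamma(h)=(r\cos2\pi nh,\,r\sin2\pi nh,\,h)$ with $h\in[0,\ell]$, I would decompose the curve into its $N=n\ell$ successive turns $T_1,\dots,T_N$, with $T_i$ placed at height $h_i=i/n$, and split the double integral in \eqref{Hadamard_N} accordingly. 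The self-blocks $T_i\times T_i$, which carry the diagonal singularity, together with the counter term $\ciil\log\e$, combine into a sum of regularized self-inductances of nearly circular loops of radius $r$; each is bounded independently of $n$, so their total is $O(n)$ and disappears after dividing by $n^2$.

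The quadratic growth thus comes entirely from the off-diagonal blocks. For $i\neq j$ the turns are disjoint and, as $n\to\infty$, each turn converges to the planar circle $C(h_i)$ of radius $r$ at height $h_i$; hence $\const\iint_{T_i\times T_j}|\vect x_1-\vect x_2|^{-1}\,d\vect x_1\cdot d\vect x_2\to M(|h_i-h_j|)$, where
\[
M(z)=\const\,r^2\int_0^{2\pi}\!\!\int_0^{2\pi}\frac{\cos(\phi_1-\phi_2)}{\sqrt{4r^2\sin^2\frac{\phi_1-\phi_2}2+z^2}}\,d\phi_1\,d\phi_2
\]
is Maxwell's mutual inductance of two coaxial circles of radius $r$ at axial separation $z$. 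Reading $\frac1{n^2}\sum_{i\neq j}M(|h_i-h_j|)$ as a Riemann sum of mesh $1/n$, I would obtain
\[
\lim_{n\to\infty}\frac{H_N(\Gamma_n)}{n^2}=\int_0^\ell\!\!\int_0^\ell M(|h_1-h_2|)\,dh_1\,dh_2=2\int_0^\ell(\ell-z)\,M(z)\,dz,
\]
and then reduce $M$ to elliptic integrals and integrate once more in $z$ to reach the closed form in \eqref{solenoid}. The main obstacle lies in this homogenization step: adjacent turns are only $1/n$ apart, where the circle approximation of a helical turn fails and $M(1/n)$ grows like $\log n$, so one must show both that these near-diagonal terms stay within $o(n^2)$ and that the Riemann sum converges to $\int\!\int M$ despite the integrable logarithmic singularity of $M$ at $z=0$; this, rather than the final (lengthy but routine) elliptic evaluation, is where the real work sits.

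For \eqref{solenoid-l} the argument is purely asymptotic. Setting $m=-4r^2/\ell^2\to0$ and using $\ell^2\mp4r^2=\ell^2(1\pm m)$, the bracket in \eqref{solenoid} factors as $\ell^3\big[-(1+m)E(m)+(1-m)K(m)\big]$. Inserting the small-modulus expansions $K(m)=\frac\pi2(1+\frac m4+\frac{9}{64}m^2+\cdots)$ and $E(m)=\frac\pi2(1-\frac m4-\frac3{64}m^2+\cdots)$, the constant-in-$m$ parts cancel (removing the $O(\ell^3)$ growth), the $O(m)$ parts reconstruct $3\pi r^2\ell$, and combining with the leading $-r^3$ term gives $L_{r,\ell}=\mu_0\big(\pi r^2\ell-\frac83r^3\big)+O(1/\ell)$. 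The only point to check is that the $O(m^2)$ corrections enter at order $1/\ell$ and hence vanish in the limit, which yields \eqref{solenoid-l}.
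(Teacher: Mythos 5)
Your proposal is correct and rests on the same core idea as the paper: both arguments homogenize the tightly wound helix, in the limit $n\to\infty$, into a uniform current sheet on the cylinder $M=M_{r,\ell}$, identifying $\lim_{n\to\infty}H_N(\Gamma_{r,\ell,n})/n^2$ with $\const\iint_{M\times M}\vect v_{\vect x_1}\cdot\vect v_{\vect x_2}\,|\vect x_1-\vect x_2|^{-1}\,d\vect x_1 d\vect x_2$, and your first step (Theorem \ref{theorem1} (4) together with $\mathcal{L}=O(n)$) and your closing small-modulus expansion of $K$ and $E$ coincide with the paper's steps (1) and (3). Where you differ is in the bookkeeping of the homogenization. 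The paper compares the curve integral over $\Gamma_n\times\Gamma_n$ directly with the surface integral over $M\times M$, cutting out a curved square neighbourhood $N_\delta(\vect x)$ of the diagonal and controlling the near-diagonal contribution of the helix through its steps (i)--(iv); you instead decompose $\Gamma_n$ into its $n\ell$ turns, absorb the counter term into the $O(n)$ sum of regularized self-inductances of the nearly circular diagonal blocks, and read the off-diagonal blocks as a Riemann sum for $\int_0^\ell\!\int_0^\ell M(|h_1-h_2|)\,dh_1dh_2=2\int_0^\ell(\ell-z)M(z)\,dz$, with $M(z)$ Maxwell's mutual inductance of coaxial circles --- which, after integrating out the two angular variables, is exactly the paper's cylinder integral. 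Your version makes the physical content (a superposition of circle-pair mutual inductances) and the location of the analytic difficulty (the blocks at separation $O(1/n)$, where $M(z)\sim\log(1/z)$) more explicit, while the paper's pointwise comparison avoids the turn decomposition and treats the diagonal uniformly; the genuine technical work --- showing that the strands within distance $\delta$ of a point contribute $o(n^2)$ and that the singular Riemann sum converges --- is the same in both treatments, and you leave it at a level of detail comparable to the paper itself, which states its approximation steps (i)--(iv) without full proofs, just as you flag but do not carry out the near-diagonal estimate.
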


Thus the asymptotics of regularized self-inductance as $n$ and $\ell$ go to infinity is given by 
\[
H_N(\Gamma_{r,\ell,n})\sim H_W(\Gamma_{r,\ell,n}) \sim \mu_0 \pi r^2 n^2 \ell \hspace{0.4cm} (n,\ell \to \infty),
\]
as is expected. 

\begin{remark}\rm 
The right hand side of \eqref{solenoid} multiplied by $n^2$ has already appeared in \cite{BAB} and \cite{DO} (and in \cite{L} according to \cite{BAB}) as the limit of the mutual inductance of coaxial solenoids as they overlap. 
\end{remark}

\begin{proof}
(1) The first equality of \eqref{solenoid} follows from Theorem \ref{theorem1} (4) since $\mathcal{L}=\ell \sqrt{(2\pi n r)^2+1}=O(n)\,$.

(2) The second equality of \eqref{solenoid} can be proved as follows. 
Let $M=M_{r,\ell}$ be a cylinder with radius $r$ and length $l$ parametrized by cylindrical coordinates; 
\[
\vect p\colon[0,2\pi r]\times[-\ell/2,\ell/2]\ni(t,z)\mapsto(r\cos(t/r),\,r\sin(t/r),\,z)
.\]
Let $\vect v_{\vect x}$ be the unit tangent vector to the ``meridean circle'' of $M_{r,\ell}$ through $\vect x$ which is given by $\vect v_{\vect x}=\vect p_t=(\partial\vect p/\partial t)$. 
We show 
\begin{equation}\label{coil->2dim}
\lim_{n\to\infty}\frac{H_N(\Gamma_{n})}{n^2} 
=\const\iint_{M\times M} 
\frac{\vect v_{\vect x_1}\cdot \vect v_{\vect x_2}}{|\vect x_1-\vect x_2|}\,d\vect x_1 d\vect x_2
\end{equation}
by the following steps. 

Let $\e_0$ be any posotive number. 

(i) For any positive number $\delta$ there exists a natural number $n_0$ such that for nay point $\vect x$ in $M$ there holds
\[
\left|
\iint_{(\Gamma_{n}\times \Gamma_{n})\setminus N_\delta(\vect x)} 
\,\frac{\,d\vect x_1\cdot d\vect x_2\,}{|\vect x_1-\vect x_2|}
-\iint_{(M\times M)\setminus N_\delta(\vect x)} 
\frac{\vect v_{\vect x_1}\cdot \vect v_{\vect x_2}}{|\vect x_1-\vect x_2|}\,d\vect x_1 d\vect x_2
\right|<\e_0,
\]
where $N_\delta(\vect x)$ is a ``curved square neighbourhood'' of $\vect x=\vect p(t,z)$ in $M$ given by 
\[
N_\delta(\vect x)=\vect p\left((t-\delta,t+\delta)\times
\left((z-\delta,z+\delta)\cap[-\ell/2,\ell/2]\right)\right), 
\]
where we assume that the coordinate $\theta$ is considered modulo $2\pi$. 


(ii) There exists a positive number $\delta_2$ such that if $0<\delta\le\delta_2$ then for any $\vect x$ in $M$, 
\[\iint_{(M\times M)\,\cap N_\delta(\vect x)} 
\frac{\vect v_{\vect x_1}\cdot \vect v_{\vect x_2}}{|\vect x_1-\vect x_2|}\,d\vect x_1 d\vect x_2<\e_0.\]

(iii) There is a natural number $n_1$ such tha if $n\ge n_1$ then for any positive number $\delta$ with $0<\delta\le\delta_2$, 
\[
\frac1n\left|\,
\lim_{\e\to0^+}\left(\int_{[-\delta,-\e]\cup[\e,\delta]}
\frac{\vect p_t(t,0)\cdot\vect p_t(0,0)}{|\vect p(t,0)-\vect p(0,0)|}\,dt
+2\log\e\right)
+2\sum_{i=1}^{\lfloor \delta/n \rfloor}\int_{-\delta}^{\delta}
\frac{\vect p_t(t,i/n)\cdot\vect p_t(0,0)}{|\vect p(t,i/n)-\vect p(0,0)|}\,dt
\,\right|<2\e_0,
\]
where $\lfloor u \rfloor$ is the floor function that gives the greatest integer less than or equal to $u$. 

(iv) There is a natural number $n_2$ with $n_2\ge n_1$ such that for any positive number $\delta$ with $0<\delta\le\delta_2$ and for any point $\vect x$ in $M$, 
\[
\frac1n\left|\,
\lim_{\e\to0^+}\left(
\int_{(\Gamma_n\cap N_\delta(\vect x))\setminus\widetilde\Delta_\e}\frac{\vect t_{\vect x}\cdot\vect t_{\vect x_2}}{|\vect x-\vect x_2|}\,dx_2+2\log\e
\right)
\,\right|<3\e_0.
\]

Since the right hand side of \eqref{coil->2dim} is given by 
\[\begin{array}{l}
\displaystyle 
\const\int_0^{2\pi}\int_0^\ell\int_0^{2\pi}\int_0^\ell
\frac{\cos(\theta_1-\theta_2)}{\sqrt{4r^2\sin^2\frac{\theta_1-\theta_2}2+(z_1-z_2)^2}}\,r^2\,dz_1d\theta_1dz_2\theta_2 \\[4mm]
=\displaystyle 2\mu_0r^3\int_0^{\frac\pi2}\int_0^{\frac\ell r}\int_0^{\frac\ell r}
\frac{1-2\sin^2\theta}{\sqrt{4\sin^2\theta+(z_1-z_2)^2}}\,dz_1dz_2d\theta.
\end{array}\]
Some computation shows that it is equal to the right hand side of \eqref{solenoid}. 

\smallskip
(3) The equality \eqref{solenoid-l} follows from the following expansion; 
\[
\frac13\left(
-\ell\left(\ell^2-4\right)E\left(-\frac{4}{\ell^2}\right)
+\ell\left(\ell^2+4\right)K\left(-\frac{4}{\ell^2}\right)
\right)
=\pi\ell+\frac\pi2\cdot\frac1\ell+O\left(\frac1{\ell^3}\right).
\]
\end{proof}

\if0 
Putting $c:=\sqrt{r^2+1/(2\pi n)^2}$, we can assume that $\Gamma$ is parametrized by the arc-length by 
\[
\gamma(s)=\left(r\cos\frac{s}c, \, r\sin\frac{s}c, \, \frac1{2\pi n}\cdot\frac{s}c \right)\hspace{0.4cm}(0\le s\le 2\pi n \ell c). 
\]
Define the regularized self-inductance of $\Gamma$ in the sense of Neumann and of Weber by 
\begin{eqnarray}
H_N(\Gamma)&:=&\displaystyle \lim_{\e\to0^+}\left(\const
\iint_{|s_2-s_1|\ge\e}\frac{\gamma'(s_1)\cdot\gamma'(s_2)}{|\gamma(s_2)-\gamma(s_1)|}\,ds_1 ds_2
+\cii(2\pi n\ell c)\log\e\right) \label{Neumann-solenoid} \\
H_W(\Gamma)&:=&\displaystyle \lim_{\e\to0^+}\left(\const
\iint_{|s_2-s_1|\ge\e}\frac{\left[\gamma'(s_1)\cdot(\gamma(s_2)-\gamma(s_1))\right]\left[\gamma'(s_2)\cdot(\gamma(s_2)-\gamma(s_1))\right]}{|\gamma(s_2)-\gamma(s_1)|^3}\,ds_1 ds_2 \right. \nonumber \\[4mm]
&&\displaystyle \hspace{8cm} \left. \phantom{\displaystyle \int\frac 12}
+\cii(2\pi n\ell c)\log\e\right) \label{Weber-solenoid} 
\end{eqnarray}

\begin{equation}\label{Neumann-solenoid}
H_N(\Gamma):=\lim_{\e\to0^+}\left(\const
\int_\e^{2\pi n\ell c-\e}\left(
\int_{|s_2-s_1|\ge\e}\frac{\gamma'(s_2)\cdot\gamma'(s_1)}{|\gamma(s_2)-\gamma(s_1)|}\,ds_2
\right)ds_1
+\cii(2\pi n\ell c-2\e)\log\e\right).
\end{equation}
\fi 



\section{Appendix}
\subsection{The second residues of $\vect{F_N}$ and $\vect{F_W}$}

We can proceed the analysis of $F_N(z)$ and $F_W$ a bit more. 
\begin{proposition}\label{prop_nbd_arclength}
The second residues of $F_N$ and $F_W$ defined by \eqref{F_N(z)} and \eqref{F_W(z)} are given by 
\[
\Res(F_N,-3)=-\frac{3\mu_0}{16\pi}\int_\Gamma\kappa(x)^2 \,dx, \hspace{0.4cm}
\Res(F_W,-3)=-\frac{\mu_0}{16\pi}\int_\Gamma\kappa(x)^2 \,dx,
\]
where $\kappa$ denotes the curvature. 
\end{proposition}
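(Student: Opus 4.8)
The plan is to read off both residues from the one-variable reduction already established in the proof of Theorem \ref{theorem1}(2). There we saw that, after dropping the factor $\mu_0/(4\pi)$ and applying the coarea formula, the Neumann integral becomes $\int_\Gamma f(z)\,dx_1$ with $f(z)=\int_0^d t^z\varphi(t)\,dt$ and $\varphi=\psi_{\vect x_1}'$, and that by \eqref{residue_t^wphi} the residue of $f$ at $z=-j$ equals $\varphi^{(j-1)}(0)/(j-1)!$, i.e. the coefficient of $t^{\,j-1}$ in the Taylor expansion of $\varphi$ at $0$. Exactly as in the computation of $\Res(F_N,-1)$ there, interchanging the residue with $\int_\Gamma(\cdot)\,dx_1$ gives
\[
\Res(F_N,-3)=\frac{\mu_0}{4\pi}\int_\Gamma\big(\text{coefficient of } t^2 \text{ in } \varphi\big)\,dx_1,
\]
and likewise for $F_W$. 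The whole matter thus reduces to identifying the $t^2$-coefficients of $\varphi$ and of its Weber analogue.

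For the Neumann case this is immediate: Lemma \ref{lemma_series_varphi} gives $\varphi(t)=2-\frac{3\kappa^2}4t^2+O(t^4)$, so the $t^2$-coefficient is $-\frac{3\kappa^2}4$ and
\[
\Res(F_N,-3)=\frac{\mu_0}{4\pi}\int_\Gamma\Big(-\frac{3\kappa^2}4\Big)dx=-\frac{3\mu_0}{16\pi}\int_\Gamma\kappa(x)^2\,dx.
\]
For the Weber case I would repeat the construction of Lemma \ref{lemma_series_varphi} with the numerator $\vect t_{\vect x_1}\cdot\vect t_{\vect x_2}$ replaced by $\cos\theta_1\cos\theta_2=(\vect{\hat r}_{12}\cdot\vect t_{\vect x_1})(\vect{\hat r}_{12}\cdot\vect t_{\vect x_2})$: set $\psi^W_{\vect x_1}(t)=\int_{\Gamma\cap B_t(\vect x_1)}\cos\theta_1\cos\theta_2\,dx_2$ and $\varphi^W=(\psi^W_{\vect x_1})'$. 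Since the proof of Theorem \ref{theorem1}(2) runs in parallel for Weber, $F_W$ reduces to $\int_\Gamma f^W(z)\,dx_1$ with $f^W(z)=\int_0^d t^z\varphi^W(t)\,dt$, whose residue at $-3$ is again the $t^2$-coefficient of $\varphi^W$. To extract it I would mimic the substitution in \eqref{f_series_psi_in_t}: split into the $s\ge0$ and $s<0$ branches, multiply the expansion \eqref{estimate_numerator_W} of the numerator by the branch-wise derivative $ds/dt$ read off from \eqref{estimate_s_by_t}, and integrate. A short computation yields $\psi^W_{\vect x_1}(t)=2t-\frac{\kappa^2}{12}t^3+O(t^4)$, hence $\varphi^W(t)=2-\frac{\kappa^2}4t^2+O(t^3)$, so the $t^2$-coefficient is $-\frac{\kappa^2}4$ and
\[
\Res(F_W,-3)=\frac{\mu_0}{4\pi}\int_\Gamma\Big(-\frac{\kappa^2}4\Big)dx=-\frac{\mu_0}{16\pi}\int_\Gamma\kappa(x)^2\,dx.
\]

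The main point to get right is the Weber expansion. One must check that $\cos\theta_1\cos\theta_2$ really extends smoothly across the diagonal (the individual factors only tend to $\pm1$, but their product tends to $+1$, which is precisely what \eqref{estimate_numerator_W} reflects), so that the residue formula \eqref{residue_t^wphi} applies to $f^W$ just as it did to $f$; this is what justifies invoking Proposition 3.1 of \cite{OS2} with $\rho(x_1,x_2)=\cos\theta_1\cos\theta_2$. I also expect to remark that only the $t^2$ term of \eqref{estimate_numerator_W} enters, since the $t^2$-coefficient of $\varphi^W$ depends only on the $t^3$-coefficient of $\psi^W$, hence only on the product of the numerator and $ds/dt$ through order $t^2$; the $O(t^3)$ accuracy of \eqref{estimate_numerator_W} is therefore more than enough, and the odd-order terms that differ between the two branches never contribute.
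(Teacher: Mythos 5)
Your proposal is correct and is essentially the paper's own proof, fleshed out: the paper derives $\Res(F_N,-3)$ directly from \eqref{series_varphi} and \eqref{residue_t^wphi}, and disposes of the Weber case with ``can be proved similarly using \eqref{estimate_numerator_W}'' --- which is precisely the computation you carry out, and your expansion $\varphi^W(t)=2-\frac{\kappa^2}{4}t^2+O(t^3)$ (hence residue $-\kappa^2/4$ pointwise) checks out. Your added remarks on the smoothness of $\cos\theta_1\cos\theta_2$ across the diagonal and on why the branch-dependent odd-order terms cannot affect the $t^2$-coefficient are exactly the details the paper leaves implicit.
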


\begin{proof}
The first equality follows from \eqref{series_varphi} and \eqref{residue_t^wphi}. 

The second equality can be proved similarly using 
\eqref{estimate_numerator_W}. 
%
\end{proof}

\subsection{When the power of the denominator is $2$}\label{subsection_OS1}
If we change the power of the denominator of \eqref{f_Neumann_link} from $1$ to $2$, we obtain ``{\sl average linking with random circles}'' (\cite{OS1}). 
Let $\mathcal{S}(1,3)$ be the set of oriented circles in $\RR^3$, $dC$ a measure on $\mathcal{S}(1,3)$ which is invariant under M\"obius transformations, and $\mbox{\sl lk}(C,\Gamma_i)$ the linking number of $\Gamma$ and an oriented circle $C$. Then we have 
\[
\int_{\Gamma_1}\int_{\Gamma_2}\,\frac{\,d\vect x_1\cdot d\vect x_2\,}{|\vect x_1-\vect x_2|^2}
=2\int_{\Gamma_1}\int_{\Gamma_2}\,\frac{\,(\vect{\hat r}_{12}\cdot d\vect x_1)(\vect{\hat r}_{12}\cdot d\vect x_2)\,}{|\vect x_1-\vect x_2|^2}
=\frac{3\pi}4\int_{\mathcal{S}(1,3)}\mbox{\sl lk}(C,\Gamma_1)\cdot \mbox{\sl lk}(C,\Gamma_2)\,dC.
\]
When $\Gamma_1$ and $\Gamma_2$ coincide, the above integrals blow up, and they can be regularized as 
\[
\lim_{\e\to0^+}\left(\iint_{(\Gamma\times \Gamma)\setminus\Delta_\e}\,\frac{\,d\vect x_1\cdot d\vect x_2\,}{|\vect x_1-\vect x_2|^2}-\frac{2\mathcal{L}}\e\right)
=\lim_{\e\to0^+}\left(\frac{3\pi}4\int_{\mathcal{S}_\e(1,3)}\mbox{\sl lk}(C,\Gamma)^2\,dC\ - \frac{3\pi\mathcal{L}}{2\e}\right),
\]
where $\mathcal{S}_\e(1,3)$ is the set of oriented circles with radius $r\ge \e$. 
The reader is referred to \cite{OS1} for the details.

Jun O'Hara

Department of Mathematics and Informatics,Faculty of Science, 
Chiba University

1-33 Yayoi-cho, Inage, Chiba, 263-8522, JAPAN.  

E-mail: ohara@math.s.chiba-u.ac.jp

\end{document}